\let\bowtie\relax
\DeclareSymbolFont{STIXsymbols}{LS1}{stixscr}{m}{n}
\DeclareMathSymbol{\bowtie}{\mathrel}{STIXsymbols}{"0E}
\newtheorem{thm}{Theorem}[section]
\newtheorem{prop}[thm]{Proposition}
\newtheorem{lem}[thm]{Lemma}
\newtheorem{conj}[thm]{Conjecture}
\newtheorem{rmk}[thm]{Remark}
\DeclareMathOperator\Aut{Aut}
\DeclareMathOperator\sym{Sym}
\DeclareMathOperator\alt{Alt}
\DeclareMathOperator\Der{Der}
\DeclareMathOperator{\cay}{Cay}
\DeclareMathOperator{\Fix}{F}
\DeclareMathOperator{\soc}{Soc}
\newcommand{\agl}[2]{\operatorname{AGL}_#1(#2)}
\newcommand{\pgl}[2]{\operatorname{PGL}_#1(#2)}
\newcommand{\psl}[2]{\operatorname{PSL}_#1(#2)}
\newcommand{\mathieu}[1]{\operatorname{M}_{#1}}
\newcommand{\sln}[2]{\operatorname{SL}_#1(#2)}
\newcommand{\congr}[2]{=\ #1\ (\operatorname{mod}\ #2)}
\newcommand{\PAut}{\operatorname{PAut}}
\newcommand{\po}[2]{\operatorname{P\Omega}_{#1}^\varepsilon(#2)}
\newcommand{\pg}[2]{\operatorname{PG}_{#1}(#2)}
\newcommand{\Ker}{\operatorname{Ker}}
\newcommand{\Qed}{\rule{2.5mm}{3mm}}
\newcounter{case}
\renewcommand{\thecase}{\arabic{case}}
\newcounter{subcase}
\numberwithin{subcase}{case}
\newenvironment{proof}{{\noindent \sc Proof.}}{\hfill $\Qed$ \\}
\begin{document}

%%%%%%%%%%%%%%%% title page %%%%%%%%%%%%%%%%%%%%%%%%%%%

%Erd\H{o}s-Ko-Rado type theorems arising
\begin{center}
{\bf\Large Intersection density of imprimitive groups of degree $pq$} \\ [+4ex]  
\large Angelot Behajaina{\textsuperscript{a}}, 
\addtocounter{footnote}{0} 
\large Roghayeh Maleki{\textsuperscript{b,c}} and  
\addtocounter{footnote}{0} 
%\\

Andriaherimanana Sarobidy Razafimahatratra{\textsuperscript{b,c,}\footnote{ corresponding author: sarobidy@phystech.edu}} %} 
\\ [+2ex]
{\it \small 
	\textsuperscript{a}Department of Mathematics, Technion - Israel Institute of Technology, Haifa, Israel\\
\textsuperscript{b}University of Primorska, UP FAMNIT, Glagolja\v ska 8, 6000 Koper, Slovenia\\
\textsuperscript{c}University of Primorska, UP IAM, Muzejski trg 2, 6000 Koper, Slovenia\\
}
\end{center}

\begin{abstract}

	A subset $\mathcal{F}$ of a finite transitive group $G\leq \sym(\Omega)$ is \emph{intersecting} if any two elements of $\mathcal{F}$ agree on an element of $\Omega$. The \emph{intersection density} of $G$ is the number $$\rho(G) = \max\left\{ \mathcal{|F|}/|G_\omega| \mid \mathcal{F}\subset G \mbox{ is intersecting} \right\},$$ where $\omega \in\Omega$ and $G_\omega$ is the stabilizer of $\omega$ in $G$.
	It is known that if $G\leq \sym(\Omega)$ is an imprimitive group of degree a product of two odd primes $p>q$ admitting a block of size $p$ or two complete block systems, whose blocks are of size $q$, then $\rho(G) = 1$.
	
	In this paper, we analyse the intersection density of imprimitive groups of degree $pq$ with a unique block system with blocks of size $q$ based on the kernel of the induced action on blocks. For those whose kernels are non-trivial, it is proved that the intersection density is larger than $1$ whenever there exists a cyclic code $C$ with parameters $[p,k]_q$ such that any codeword of $C$ has weight at most $p-1$, and under some additional conditions on the cyclic code, it is a proper rational number. For those that are quasiprimitive, we reduce the cases to almost simple groups containing $\alt(5)$ or a projective special linear group. We give some examples where the latter has intersection density equal to $1$, under some restrictions on $p$ and $q$.
\end{abstract}

\begin{quotation}
\noindent {\em Keywords:} 
derangement graph, cocliques, intersection density, {cyclic codes}.
\end{quotation}

%%%%%%%%%%%%%%%%%%%%%%%%%%%%%%%%%%%%%%%%%%%%%%%%%%%%
%%%%%%%%%%%%%%%%%%%%%%%%%%%%%%%%%%%%%%%%%%%%%%%%%%%%

\section{Introduction}

All groups considered in this paper are finite. Given a finite set $\Omega$, we let $\sym(\Omega)$ be the symmetric group of $\Omega$. A set $\mathcal{F}\subset \sym(\Omega)$ is \emph{intersecting} if for any $g,h\in \mathcal{F}$, there exists $\omega\in \Omega$ such that $\omega^g = \omega^h$. We are interested in studying the largest intersecting sets in finite permutation groups. Problems of this nature have been studied since the late 70s with the paper of Deza and Frankl \cite{Frankl1977maximum} on the size and the possible structures of the largest intersecting sets in the symmetric group $\sym(n)$ with its natural action on $\{1,2,\ldots,n\}$. Deza and Frankl proved that if $\mathcal{F} \subset \sym(n)$ is intersecting, then $|\mathcal{F}|\leq (n-1)!$. Later, Cameron and Ku \cite{cameron2003intersecting}, independently, Larose and Malvenuto \cite{larose2004stable}, proved that equality holds if and only if $\mathcal{F}$ is a coset of a stabilizer of a point of $\sym(n)$. Consequently, the maximum intersecting sets in $\sym(n)$ are cosets of a stabilizer of a point. 

A natural question is then whether similar results to the one on the symmetric group hold for general transitive permutation groups. Unfortunately, the answer to this question is negative. To see this, consider the alternating group $G = \alt(4)$ acting on left cosets of $H = \langle (1\ 2)\rangle$ by left multiplication. This action is transitive and its point stabilizers have size $2$, however, the Klein group $K = \langle (1\ 2),(3\ 4) \rangle$ is an intersecting set of size $4$. 

In \cite{li2020erd}, a measure for intersecting sets in transitive groups was introduced. Formally, the \emph{intersection density} of a transitive group $G\leq\sym(\Omega)$ is the rational number
\begin{align*}
	\rho(G) := \max \left\{ \rho(\mathcal{F}) \mid \mathcal{F} \subset G \mbox{ is intersecting} \right\},
\end{align*}
with $\rho(\mathcal{F}) := |\mathcal{F}|/|G_\omega|$ for any intersecting set $\mathcal{F}$, where $G_\omega$ is the point stabilizer of $\omega \in \Omega$ in $G$. The study of the intersection density parameter in transitive groups has recently drawn the attention of many researchers \cite{behajaina20203,hujdurovic2022intersection,2021arXiv210803943H,hujdurovic2021intersection,meagher2019erdHos,meagher2021erdHos,meagher180triangles,meagher2016erdHos,gl2,AMC2554,spiga2019erdHos}. One of the major results about the intersection density of a group was proved in \cite{meagher180triangles}. It was proved that the largest possible value of the intersection density of a group acting transitively on $\Omega$, with $|\Omega| \geq 3$, is $\frac{|\Omega|}{3}$ and this upper bound is sharp since  it is attained by the groups given in \cite[Theorem~5.1]{meagher180triangles}.

{The study of the intersection density of the transitive group $G\leq \sym(\Omega)$, as we will see later in Section~\ref{sect:background}, is equivalent to the study of the cocliques of a certain Cayley graph $\Gamma_G$ called the derangement graph of $G$. The clique-coclique \cite{godsil2016erdos} bound states that \footnote{$\alpha(\Gamma_G)$ and $\omega(\Gamma_G)$ denote respectively the maximum size of cocliques and cliques in $\Gamma_G$} $\alpha(\Gamma_G)\omega(\Gamma_G) \leq |G|$. The knowledge of cocliques in $\Gamma_G$ can therefore be used to give an upper bound on certain complex structures on $G$ arising from coding theory, provided that $G$ is highly transitive. First recall that $\mathcal{F} \subset G$ is called \emph{$k$-intersecting} if there exist distinct $\omega_1,\omega_2,\ldots,\omega_k \in \Omega$ such that $\omega^g_i = \omega^h_i$ for any $g,h\in \mathcal{F}$ and $1\leq i\leq k$. Let $\Omega^{(k)}$ be the set of all $k$-tuples of $\Omega$ with pairwise distinct entries. It is not hard to see that a $k$-intersecting set of $G\leq \sym(\Omega)$ is exactly an intersecting set of $G\leq \sym(\Omega^{(k)})$, provided that $G$ is also $k$-transitive, that is, it is transitive on $\Omega^{(k)}$. For any two permutations $g,h\in G$, define $\operatorname{d}(g,h) = |\Omega|-\left|\left\{\omega\in \Omega\ :\ \omega^g = \omega^h \right\}\right|$ to be the distance between $g$ and $h$. A $\operatorname{PC}(G,k)$ \emph{permutation code} of a transitive group $G\leq \sym(\Omega)$ is a subset $S \subset G$ such that $\operatorname{d}(g,h)\geq k$, for any two elements $g,h\in G$. A $\operatorname{PC}(G,k)$ permutation code is in fact a clique in the derangement graph of $G\leq \sym\left(\Omega^{(|\Omega|-k+1)}\right)$. If $\Gamma_{G}^{(|\Omega|-k+1)}$ is the derangement graph of $G\leq \sym\left(\Omega^{(|\Omega|-k+1)}\right)$, then by using the clique-coclique bound on $\Gamma_G^{(|\Omega|-k+1)}$,  we obtain a general upper bound on the maximum size of a $\operatorname{PC}(G,k)$ permutation code. This yields an upper bound which is not tight in general \cite{bereg2020lower,dukes2010bounds} and the classification of multiply transitive groups forces $k$ to be large, unless $G$ is equal to the full symmetric group $\sym(\Omega)$ or the alternating group $\alt(\Omega)$. A more general approach to permutation codes arising from permutation groups is given in \cite{cameron2010permutation}.}

The overwhelming majority of the works on the intersection density of transitive groups focus on trying to classify the transitive groups whose intersection densities are equal to $1$ or whose largest intersecting sets are cosets of a point stabilizer. For instance, Meagher, Spiga and Tiep \cite{meagher2016erdHos} proved that any finite $2$-transitive group has intersection density $1$. In \cite{li2020erd,meagher180triangles}, works on transitive groups that have large intersection densities were initiated.

Another interesting problem is to study the intersection density of groups with fixed degrees.
It was conjectured in \cite[Conjecture~6.6]{meagher180triangles} that if $G$ is a transitive group of degree a product of two distinct primes $q<p$, then $\rho(G) = 1$ if $p$ and $q$ are odd, and $\rho(G) \leq 2$ if $q=2$. The latter  was proved in \cite{AMC2554}. In \cite{hujdurovic2021intersection},  Hujdurovi\'{c} et al. proved further that the intersection density of such groups is either $1$ or $2$; if it is equal to $2$, then the group is a split extension of an elementary abelian $2$-group. In \cite{hujdurovic2022intersection},  Hujdurovi\'{c} et al. gave an elegant construction of transitive groups disproving the conjecture when $p$ and $q$ are both odd. They provided a family of imprimitive groups of degree $pq$ ($q<p$ are odd primes) whose intersection densities are equal to $q$. 

Although \cite[Conjecture~6.6~(3)]{meagher180triangles} is false, it is natural to ask about the possible values of the intersection density of transitive groups of degree a product of two distinct odd primes $p>q$. When $G$ is primitive of degree $pq$, the third author proved in \cite{razafimahatratra2021intersection} that $\rho(G) = 1$ for several families. It is in fact conjectured in \cite[Conjecture~9.1]{razafimahatratra2021intersection} that all primitive groups of degree a product of two odd primes have intersection densities equal to $1$. Imprimitive groups of degree $pq$ that admit a block system with blocks of size $p$ have intersection density equal to $1$ (see \cite{hujdurovic2022intersection}). It was also proved in \cite{razafimahatratra2021intersection} that any imprimitive group of degree a product of two odd primes with at least two (imprimitivity) block systems has intersection density equal to $1$. 
The objective of this paper is to study the intersection density of the remaining families of imprimitive groups of degree a product of two primes $p$ and $q$ (where $p>q$), i.e., those with a unique block system of size $q$. To this end, for any $n\geq 2$, we define the set
\begin{align*}
	\mbox{$\mathcal{I}_{n}:= \{ \rho(G)  \mid G \mbox{ is transitive of degree }n \}$.}
\end{align*}
The set $\mathcal{I}_n$, for $n\geq 2$, was first introduced in \cite{meagher180triangles}. For any prime $p$, we have $\mathcal{I}_p = \{1\}$ since any transitive group of prime degree contains a regular subgroup, which implies that its intersection density is equal to $1$ (see Lemma~\ref{lem:no-hom}). For any odd prime $p$, it follows from \cite{hujdurovic2021intersection} that $\mathcal{I}_{2p} = \{ 1,2 \}$. When $p$ and $q$ are two distinct odd primes ($q<p$), it is known that any transitive group $G\leq \sym(\Omega)$ of degree $pq$ admits a semiregular element of order $p$ \cite{maruvsivc1981vertex}. Using this fact, one can prove that $\rho(G)\leq q$, for any transitive group of degree $pq$ (see Lemma~\ref{lem:no-hom}). This upper bound is sharp since the transitive groups given in \cite{hujdurovic2022intersection} attain it. Apart from these few results, very little is known about the set $\mathcal{I}_{pq}$.

Henceforth, we will assume that $p>q$ are odd primes and $G\leq \sym(\Omega)$ is imprimitive of degree $pq$ admitting a unique block system $\mathcal{B}$ with blocks of size $q$. Let $\overline{G} \leq \sym(\mathcal{B})$ be the induced action of $G$ on $\mathcal{B}$ and denote the kernel of the canonical epimorphism $G \to \overline{G}$ by $\Ker(G\to \overline{G})$. If $G$ is \emph{quasiprimitive}, i.e., all its non-trivial normal subgroups are transitive, then $\Ker(G \to \overline{G})$ is trivial since it is intransitive. Conversely, if $\Ker(G \to \overline{G})$ is trivial, then a non-trivial normal subgroup of $G$ is transitive, otherwise its orbits would form a block system of $G$ (see Lemma \ref{lem:normal-blocks}) and, by uniqueness of $\mathcal{B}$, this normal subgroup would be trivial. Hence, $G$ is quasiprimitive if and only if $\Ker(G\to \overline{G})$ is trivial. Consequently, we shall distinguish the two cases whether $\Ker(G\to \overline{G})$ is trivial or not. 

The first part of this paper focuses on the case where the kernel of the induced action is trivial (i.e., the quasiprimitive cases). We first reduce the quasiprimitive cases to groups containing one of two almost simple groups. Since this reduction is an easy exercise that follows from the classification of the transitive groups of prime degree, due to Burnside in 1900, we only state it as a remark.

\begin{rmk}
	If $G\leq \sym(\Omega)$ is an imprimitive quasiprimitive of degree a product of two odd primes $p>q$, then $G$ is an almost simple group with socle equal to 
	\begin{enumerate}[(i)]
		\item $\alt(5)$ admitting a transitive action of degree $3\times 5 $,\label{rmk:quasi-alt}
		\item $\psl{2}{11}$ admitting a transitive action of degree $5\times 11$, or \label{rmk:quasi-psl2}
		\item $\psl{d}{r^{d^m}}$ admitting a transitive action of degree $pq$, for some $d\geq 2, \ m\geq 0$, and a prime number $r$ such that $p = \frac{r^{{d^{m+1}}}-1}{r^{d^m}-1}$.\label{rmk:quasi-psl}
	\end{enumerate}
	\label{thm:quasiprimitive}
\end{rmk}
{A discussion regarding this result is given in Section~\ref{sect:quasiprimitive} for sake of completeness.}
The groups in \eqref{rmk:quasi-alt} and \eqref{rmk:quasi-psl2} in Remark~\ref{thm:quasiprimitive} have intersection densities equal to $1$ via \verb*|Sagemath| \cite{sagemath}. Hence, the remaining minimal cases for the quasiprimitive case are the imprimitive actions of $\psl{d}{r^{d^m}}$, of degree a product of two odd primes. These groups are extremely difficult to deal with and we believe that new techniques are required to determine their intersection density in general. We present a case where the intersection density of such groups can be computed using the known techniques in the literature. This result is a particular case of Remark~\ref{thm:quasiprimitive}\eqref{rmk:quasi-psl} where $m=0 \mbox{ and } d = r$, and in this case, there is a unique imprimitive permutation representation of degree $pq$ of $\psl{r}{r}$, where $p = \frac{r^r-1}{r-1}$ and $q = r$.

\begin{thm}
	For any odd prime $q$, if $p= \frac{q^q-1}{q-1}$ is a prime, then the intersection density of the imprimitive representation of $\psl{q}{q}$ of degree $pq$ is $1$.\label{thm:psl-case}
\end{thm}
Based on computational results, we make the following conjecture.
\begin{conj}
	If $G$ is a quasiprimitive group of degree $pq$, then $\rho(G) = 1$.\label{conj2}
\end{conj}

The remaining results of the paper are about the non-quasiprimitive cases, i.e., when $\Ker(G\to \overline{G})$ is non-trivial.
Our next result is stated as follows.
\begin{thm}
	Let $G\leq \sym(\Omega)$ be a transitive group of degree $pq$ admitting a unique block system $\mathcal{B}$ whose blocks are of size $q$. Suppose that $H = \Ker(G\to \overline{G})$ is non-trivial.  Let $N$ be a minimal normal subgroup of $G$ contained in $H$.  
	\begin{enumerate}[(i)]
		\item If $N$ is non-abelian, then $\rho(G) = 1$.\label{i}
		\item If $N$ is abelian and $H$ admits a derangement, then $\rho(G) = 1$.\label{ii}
	\end{enumerate}\label{thm:codes}
\end{thm}

The counterexamples to \cite[Conjecture~6.6~(3)]{meagher180triangles} given in \cite{hujdurovic2021intersection} are based on the existence of certain cyclic codes (see \S\ref{sec:conimcod} for the basic definitions). In this paper, we also prove a certain converse to their construction.

\begin{thm}
	Let $G$ be a group as in Theorem \ref{thm:codes}. Moreover, assume that $G$ is minimally transitive with intersection density $\rho(G) = q$. Then, there is a $[p,k]_q$ cyclic code $C$ such that the Hamming weight $w(\mathbf{c})$ is at most $p-1$ for all $\mathbf{c}\in C$.\label{thm:characterization}
\end{thm}

These two results imply that if $\Ker(G\to \overline{G})$ is non-trivial and $\rho(G)>1$, then $G$ is an extension of a certain derangement-free elementary abelian $q$-group. We conjecture the following based on computational results via \verb*|Sagemath| \cite{sagemath}.
\begin{conj}
	Assume that $\Ker(G\to \overline{G})$ is non-trivial. If $\overline{G}$ is $2$-transitive, then $\rho(G) = 1$. If $\overline{G} = C_p\rtimes C_k< \agl{1}{p}$, then $\rho(G) \in \left\{1\right\} \cup \left\{  \frac{q}{d} : d\mid k \mbox{ and } d\leq q \right\}$.\label{conj1}
\end{conj}
This conjecture is hard as far as we know.	
With additional restrictions on the odd primes $p$ and $q$, we prove that $\mathcal{I}_{pq}$ contains a proper rational number.
\begin{thm}
	If $q$ and $p = \frac{q^k-1}{q-1}$ are odd primes such that $k<q$, then $\frac{q}{k} \in \mathcal{I}_{pq}$.
	\label{thm:main}
\end{thm}

This paper is organized as follows. In Section~\ref{sect:background}, we  provide the necessary background to  prove the main  results. The proof of Theorem~\ref{thm:psl-case} and a discussion about Remark~\ref{thm:quasiprimitive} are given in Section~\ref{sect:quasiprimitive}. In Section~\ref{sect:code-density}, we prove Theorem~\ref{thm:codes} and Theorem~\ref{thm:characterization}. We give the proof of Theorem~\ref{thm:main} in Section~\ref{sect:rational}, and in Section~\ref{sect:future}, we give a conjecture (see Conjecture~\ref{conj:main}) that encompasses all the possibilities for the set $\mathcal{I}_{pq}$, for any values of $p$ and $q$.

\section{Background results}\label{sect:background}
In this section, we review some basics on transitive groups (\S\ref{ssec:transgro} and \S\ref{ssec:transpq}) and codes (\S\ref{ssec:codesq}), that are needed in this work. 
\subsection{Transitive groups}\label{ssec:transgro}
Throughout this section, we let $G\leq\sym(\Omega)$ be a finite transitive group. 
\subsubsection{Intersection density}
The standard technique to study the intersection density of a transitive group is to transform the problem into a graph theoretical one. Recall that if $K$ is a group and  $S \subset K\setminus \{1\}$ is a subset such that $x\in S$ implies $x^{-1} \in S$, then the Cayley graph $\cay(K,S)$ is the graph whose vertex set is $K$ and $x,y\in K$ are adjacent if $yx^{-1} \in S$. It is well known that $\cay(K,S)$ is connected if and only if $\langle S\rangle = K$. Furthermore, the number of components of $\cay(K,S)$ is equal to $[K:\langle S\rangle]$. Given a permutation group $K$, the \emph{derangement graph} of $K$, denoted by $\Gamma_K$, is the graph $\cay(K,\Der(K))$, where $\Der(K)= \{ k\in K \mid \mbox{ $k$ is a derangement} \}$. If $\Der(K) = \varnothing$, then $\Gamma_{K}$ is the empty graph, i.e., without edges. If $K$ is transitive of degree at least $2$, then  $\Der(K)\neq \varnothing$ by \cite{jordan1872recherches}, and so $\Gamma_K$ has at least one edge.

Recall that an \emph{independent set} or a \emph{coclique} in a graph is a subgraph in which no two vertices are adjacent. The size of the largest cocliques in a graph $X$ is called the \emph{independence number} and is denoted by $\alpha(X)$. It is not hard to see that given a transitive group $G \leq \sym(\Omega)$, a subset $\mathcal{F} \subset G$ is intersecting  if and only if $\mathcal{F}$ is a coclique in $\Gamma_G$. Consequently, we have $\rho(G) = \alpha(\Gamma_G)/|G_\omega|,$ for $\omega\in \Omega$.

We recall the following bound on the intersection density of a transitive group.%$$
\begin{lem} \cite[Corollary~4.2]{razafimahatratra2021intersection}
	Let $H\leq \sym(\Omega)$ be a transitive group and $K$ be a subgroup of $H$ with $k$ orbits of the same size. Then, we have 
	$
	\rho(H) \leq \rho(K)k.
	$\label{lem:no-hom}
\end{lem}

\subsubsection{Imprimitive groups}
In this part, we further assume that $G\leq \sym(\Omega)$ is imprimitive. 
Recall that $S\subset \Omega$ is a \emph{block (of imprimitivity)} of $G$ if $S^g \cap S = \varnothing$ or $S^g = S$, for all $g\in G$. It is clear that $\{\omega\}$ for $\omega \in \Omega$ and the set $\Omega$ are always blocks of the group $G$; we call such blocks \emph{trivial}. If $G$ admits a block $S$ such that $1<|S|<|\Omega|$, then we say that the group $G$ is \emph{imprimitive}. In this case, we know the collection of subsets given by $\mathcal{B} = \{ S^g \mid g\in G \}$ forms a partition of $\Omega$, that is invariant by the action of $G$. Such a $G$-invariant partition of $\Omega$ is  called a \emph{(complete) block system}.
We recall the following result.
\begin{lem}[Theorem~1.6A \cite{dixon1996permutation}]
	If $N\trianglelefteq G$, then the orbits of $N$ form a $G$-invariant partition of $\Omega$. That is, $\{ \omega^N \mid \omega \in \Omega \}$ is a $G$-invariant partition of $\Omega$.\label{lem:normal-blocks}
\end{lem}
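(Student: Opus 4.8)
The plan is to verify the two assertions separately: first that the sets $\omega^N$ partition $\Omega$, and then that this partition is preserved by the action of $G$. Neither step requires any special feature of the present setting; this is a general fact about normal subgroups of permutation groups, so I would aim for a clean formal argument rather than any computation specific to the degree $pq$ situation.

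For the partition statement, I would appeal to the standard fact that the orbits of any group action partition the underlying set. Concretely, define a relation on $\Omega$ by declaring $\alpha \sim \beta$ whenever $\beta = \alpha^n$ for some $n \in N$. Reflexivity follows from $1 \in N$, symmetry from closure of $N$ under inverses, and transitivity from closure under products; hence $\sim$ is an equivalence relation whose classes are exactly the sets $\omega^N$. These classes therefore form a partition of $\Omega$, independently of whether $N$ is normal.

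For the $G$-invariance, the key step is where the normality of $N$ enters. I would fix $g \in G$ together with an orbit $\omega^N$ and compute the image $(\omega^N)^g = \{ \omega^{ng} \mid n \in N \}$. Writing $ng = g\,(g^{-1}ng)$ and using that conjugation by $g$ permutes $N$ (because $N \trianglelefteq G$), each element $\omega^{ng}$ equals $(\omega^g)^{n'}$ with $n' = g^{-1}ng \in N$; and as $n$ ranges over $N$ so does $n'$. Consequently $(\omega^N)^g = (\omega^g)^N$, which is again an orbit of $N$. Thus $G$ merely permutes the $N$-orbits among themselves, which is exactly the statement that the partition $\{ \omega^N \mid \omega \in \Omega \}$ is $G$-invariant.

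Since the argument is entirely formal, I do not expect any genuine obstacle; the single point requiring care is the identity $(\omega^N)^g = (\omega^g)^N$, where the hypothesis $N \trianglelefteq G$ is used precisely to reabsorb the conjugate $g^{-1}ng$ back into $N$. (One could additionally note that transitivity of $G$ forces all $N$-orbits to have equal size, so the partition is uniform, but this is not needed for the stated conclusion.)
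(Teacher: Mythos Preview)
Your proof is correct and is precisely the standard argument. The paper itself does not supply a proof of this lemma; it simply quotes the result from Dixon--Mortimer \cite{dixon1996permutation}, so there is nothing further to compare.
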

Note that in Lemma~\ref{lem:normal-blocks} if $N = 1$ or $N$ is transitive, then the corresponding block systems are $\{ \{\omega \} \mid \omega\ \in \Omega\}$ or $\{\Omega\}$, respectively. That is, they consist of trivial blocks.

Recall that a subgroup $K\leq G$ is \emph{semiregular} if its stabilizers are trivial. Moreover, an element $g \in G$ is called {\it semiregular} if $\langle g \rangle \leq G$ is semiregular. Therefore, it is clear that any derangement $g \in G$ of prime order is semiregular. Next, we recall the following lemma (see \cite[Lemma 3.1]{hujdurovic2021intersection}).
\begin{lem}
	Let $G$ be a transitive group admitting a semiregular subgroup $K$ whose orbits form a $G$-invariant partition $\mathcal{B}$. Let $\overline{G}$ be the induced action of $G$ on $\mathcal{B}$. Then, we have $\rho(G) \leq \rho(\overline{G})$. \label{lem:semi-regular}
\end{lem}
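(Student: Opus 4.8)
The plan is to compare a maximum intersecting set of $G$ with its image in $\overline{G}$ under the canonical projection, controlling the loss of information along the fibers. Let $\pi\colon G\to\overline{G}$ be the homomorphism sending $g$ to the permutation $\overline{g}$ it induces on $\mathcal{B}$, and let $N=\ker\pi$ be the subgroup of $G$ fixing every block setwise. Since the blocks are the orbits of $K$ and $K$ is semiregular, $K$ acts regularly on each block; in particular $K\leq N$, and $N$ is transitive on each block $B\in\mathcal{B}$. By the orbit-stabilizer theorem, for $\omega\in B$ we have $|N_\omega|=|N|/|B|=|N|/|K|$.

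First I would show that $\pi$ sends intersecting sets to intersecting sets. If $g,h\in G$ agree at a point $\omega$, then the block $B$ containing $\omega$ satisfies $\omega^g=\omega^h\in B^g\cap B^h$; since distinct blocks are disjoint, this forces $B^{\overline g}=B^g=B^h=B^{\overline h}$, so $\overline g$ and $\overline h$ agree on $B$. Hence for any intersecting $\mathcal{F}\subseteq G$ the image $\pi(\mathcal{F})$ is intersecting in $\overline{G}$, and therefore $|\pi(\mathcal{F})|\leq\alpha(\Gamma_{\overline G})$.

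Next I would bound the size of each fiber $\mathcal{F}\cap Ng$. Because $\Gamma_G$ is a Cayley graph, right multiplication by $g^{-1}$ is an automorphism of $\Gamma_G$, so $\mathcal{F}'=(\mathcal{F}\cap Ng)g^{-1}$ is an intersecting subset of $N$. Now any two distinct elements $n_1,n_2\in\mathcal{F}'$ lying in a common right coset of $K$ satisfy $n_1n_2^{-1}\in K\setminus\{1\}$; since $K$ is semiregular, every nonidentity element of $K$ is a derangement, so $n_1n_2^{-1}\in\Der(G)$ and $n_1,n_2$ are adjacent in $\Gamma_G$, contradicting that $\mathcal{F}'$ is intersecting. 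Hence each right coset of $K$ in $N$ meets $\mathcal{F}'$ in at most one point, and $|\mathcal{F}\cap Ng|=|\mathcal{F}'|\leq[N:K]=|N|/|K|=|N_\omega|$.

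Finally I would assemble the estimate. Taking $\mathcal{F}$ to be a maximum intersecting set and summing over the distinct fibers it meets, $\alpha(\Gamma_G)=|\mathcal{F}|=\sum_{\overline g\in\pi(\mathcal{F})}|\mathcal{F}\cap\pi^{-1}(\overline g)|\leq|\pi(\mathcal{F})|\cdot|N_\omega|\leq\alpha(\Gamma_{\overline G})\,|N_\omega|$. A direct index computation gives $|N_\omega|/|G_\omega|=1/|\overline{G}_B|$, so dividing by $|G_\omega|$ yields $\rho(G)=\alpha(\Gamma_G)/|G_\omega|\leq\alpha(\Gamma_{\overline G})/|\overline{G}_B|=\rho(\overline G)$, as desired. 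The main obstacle is the fiber bound: one must exploit precisely the semiregularity of $K$ (so that nontrivial elements of $K$ are derangements) to guarantee that distinct elements of $\mathcal{F}$ in a common $K$-coset are adjacent, which is what pins the per-fiber count down to $[N:K]$ rather than the full fiber size $|N|$.
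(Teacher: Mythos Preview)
Your proof is correct. The paper itself does not supply a proof of this lemma; it simply records the statement with a citation to \cite[Lemma~3.1]{hujdurovic2021intersection}. Your argument---projecting a maximum intersecting set to $\overline{G}$, observing that the image remains intersecting, and bounding each fiber $\mathcal{F}\cap Ng$ by $[N:K]$ via the fact that right cosets of the semiregular subgroup $K$ are cliques in $\Gamma_G$---is the standard one, and the index identity $|N_\omega|/|G_\omega|=1/|\overline{G}_B|$ is verified correctly.
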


\subsubsection{The subgroup generated by non-derangements} In this section, we briefly describe the structure of the subgroup generated by the non-derangements in a transitive group. We assume that $G\leq \sym(\Omega)$ is a transitive group, where $\Omega$ is a finite set. Define
\begin{equation}\label{eq:fixg}
	\Fix(G) := \langle \bigcup_{\omega\in \Omega}G_\omega \rangle.
\end{equation}
In other words, $\Fix(G)$ is the subgroup generated by all permutations with at least one fixed point (i.e., the non-derangements). 
It is not hard  to see that $\Fix(G) \trianglelefteq G$ since the point stabilizers are conjugate. If $\Fix(G) = \{1\}$, then each point stabilizer of $G$ is trivial, thus $G$ is regular. If $\Fix(G) \neq \{1\}$, then its orbits form a block system (see Lemma~\ref{lem:normal-blocks}). 

If $\Fix(G)< G$, then the complement of the derangement graph $\Gamma_G$ is $\overline{\Gamma}_G = \cay(G,G\setminus \left( \Der(G) \cup \{1\}\right))$. As $\langle G\setminus \left( \Der(G) \cup \{1\} \right) \rangle = \Fix(G)$ is a proper subgroup of $G$, we can see that 
$\overline{\Gamma}_G$ is disconnected.  The number of components of $\overline{\Gamma}_G$ is equal to $[G:\Fix(G)]$ since $\overline{\Gamma}_G$ is also a Cayley graph. Switching to the complement, we conclude that $\Gamma_G$ is a \emph{join} (see \cite[pg~21]{Harary1969} or \cite{zykov1949some} for the definition) of $[G:\Fix(G)]$ copies of the complement of $\overline{\Gamma}_G$ that contains $1$, that is, the graph $\Gamma_{\Fix(G)} = \cay(\Fix(G),\Fix(G)\cap \Der(G)).$ 
From this explanation, the following proposition follows.
\begin{prop}
	The graph $\Gamma_G$ is complete multipartite if and only if $\Fix(G)$ is intersecting.
\end{prop}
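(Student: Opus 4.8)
The plan is to prove both implications at once by passing to the complement $\overline{\Gamma}_G$ and invoking the standard fact that a graph is complete multipartite if and only if its complement is a disjoint union of cliques. Thus I would reformulate the assertion as: $\overline{\Gamma}_G$ is a disjoint union of cliques if and only if $\Fix(G)$ is intersecting.

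Recall from the discussion preceding the statement that $\overline{\Gamma}_G = \cay(G, G\setminus(\Der(G)\cup\{1\}))$ and that the connection set generates exactly $\Fix(G)$. The first step is to describe the connected components of $\overline{\Gamma}_G$ as the right cosets $\Fix(G)g$. Since right multiplication by any $g\in G$ is an automorphism of $\overline{\Gamma}_G$ carrying the component $\Fix(G)$ onto $\Fix(G)g$, all components are isomorphic to the induced subgraph on $\Fix(G)$, namely $\overline{\Gamma}_{\Fix(G)} = \cay(\Fix(G),\, \Fix(G)\setminus(\Der(G)\cup\{1\}))$. Consequently $\overline{\Gamma}_G$ is a disjoint union of cliques if and only if this single component is a complete graph.

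The second step translates ``the component on $\Fix(G)$ is complete'' into a group-theoretic condition: it holds precisely when every pair of distinct $x,y\in\Fix(G)$ is adjacent in $\overline{\Gamma}_G$, i.e. $yx^{-1}$ is a non-derangement, which is the same as $\Fix(G)\cap\Der(G)=\varnothing$. Finally, because $\Fix(G)$ is a subgroup, the product $gh^{-1}$ ranges over all of $\Fix(G)$ as $g,h$ run through $\Fix(G)$; hence $\Fix(G)$ is intersecting if and only if no element of $\Fix(G)$ is a derangement, that is, exactly when $\Fix(G)\cap\Der(G)=\varnothing$. Chaining these equivalences yields the claim, and the degenerate case $\Fix(G)=G$ is consistent because then $\overline{\Gamma}_G$ is connected and, by Jordan's theorem $\Der(G)\neq\varnothing$, neither side can hold.

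The step that requires the most care is the reduction to a single component, and more pointedly the temptation to argue via the join decomposition already recorded above. Since the complement of a join is the disjoint union of the complements, that route only delivers ``$\Gamma_G$ is complete multipartite if and only if $\Gamma_{\Fix(G)}$ is complete multipartite'', which is a priori weaker than ``$\Gamma_{\Fix(G)}$ is edgeless''. The bridge is the defining property of $\Fix(G)$: the set of non-derangements of $\Fix(G)$ contains every point stabiliser and therefore generates $\Fix(G)$, so if it happens to be a subgroup it must exhaust $\Fix(G)$, forcing edgelessness. Working directly with the cosets in $\overline{\Gamma}_G$, as above, sidesteps this subtlety by demanding outright that the full coset $\Fix(G)$ be a clique, thereby avoiding the intermediate ``complete multipartite'' detour.
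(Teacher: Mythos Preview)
Your proof is correct and takes essentially the same approach as the paper: both pass to the complement $\overline{\Gamma}_G$, identify its connected components with the cosets of $\Fix(G)$, and conclude that $\Gamma_G$ is complete multipartite precisely when each component is a clique, i.e., when $\Fix(G)$ is derangement-free. The subtlety you flag in your third paragraph---that arguing via the join on the $\Gamma_G$ side only yields ``$\Gamma_{\Fix(G)}$ is complete multipartite'' rather than ``edgeless''---is a fair caution, but since the paper (like you) reaches the join by first working through the connected components of $\overline{\Gamma}_G$, where a component that is a disjoint union of cliques is automatically a single clique, the issue does not actually arise.
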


If $\Fix(G) = G$, then the graph $\Gamma_G$ is not a join anymore.
We recall the following result, whose proof can be found in \cite{meagher180triangles}.
\begin{prop}\cite[Lemma~2.3]{meagher180triangles}
	A subgroup $H\leq G$ is intersecting if and only if it is derangement-free.
\end{prop}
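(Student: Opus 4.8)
The plan is to recast the intersecting condition as a condition on the products $gh^{-1}$, and then to exploit the two features that distinguish a subgroup from an arbitrary intersecting set: it contains the identity and it is closed under the map $(g,h)\mapsto gh^{-1}$.

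First I would make explicit the translation underlying the derangement-graph formulation from the introduction: for $g,h\in G$, the elements $g$ and $h$ agree on some point of $\Omega$ if and only if $gh^{-1}$ is \emph{not} a derangement. Indeed, if $\omega^g=\omega^h$ for some $\omega\in\Omega$, then applying $h^{-1}$ yields $\omega^{gh^{-1}}=(\omega^g)^{h^{-1}}=(\omega^h)^{h^{-1}}=\omega$, so $gh^{-1}$ fixes the point $\omega$; conversely, if $gh^{-1}$ fixes a point $\omega$, then $\omega^g=\omega^h$. Consequently a subset $\mathcal{F}\subseteq G$ is intersecting precisely when $gh^{-1}\notin\Der(G)$ for all $g,h\in\mathcal{F}$, which is exactly the statement that $\mathcal{F}$ is a coclique in $\Gamma_G$.

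With this reformulation in hand, both implications for a subgroup $H\leq G$ fall out immediately. For the forward direction, assume $H$ is intersecting. Since $1\in H$, I can take $h=1$: for every $g\in H$ the pair $g,1$ agrees on a point, so $g=g\cdot 1^{-1}\notin\Der(G)$. Thus $H$ contains no derangement, i.e.\ $H$ is derangement-free. For the converse, assume $H$ is derangement-free. Given $g,h\in H$, closure of $H$ under products and inverses gives $gh^{-1}\in H$, whence $gh^{-1}\notin\Der(G)$; by the reformulation, $g$ and $h$ then agree on a point, so $H$ is intersecting.

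There is essentially no hard step here: the entire content is the observation that the intersecting property, which a priori constrains pairs of elements, collapses to a property of single elements once $\mathcal{F}$ is a subgroup, because every product $gh^{-1}$ again lies in the subgroup and the identity supplies a free comparison point. The only place that demands care is the bookkeeping with the right-action convention $\omega^g$ in the translation step, which one should verify consistently in whichever direction the action is written.
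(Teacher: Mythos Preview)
Your proof is correct. The paper does not actually prove this proposition: it merely recalls it as a known result, pointing to \cite{meagher180triangles} for the argument, so there is nothing further to compare.
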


\subsection{The transitive groups of degree $pq$}\label{ssec:transpq}
In this section, we categorize the types of transitive groups of degree a product of two odd primes for a better analysis of their intersection density. Let $G\leq \sym(\Omega)$ be a transitive group of degree a product of two odd primes $p>q$. Let $m$ be the number of block systems of $G$. In \cite{lucchini1991imprimitive}, Lucchini proved that if $m\geq 2$, then $m = 2$ or $m = p+1$ and $G$ has at most one block system with blocks of size $p$. It was proved in \cite{hujdurovic2022intersection} that if $G$ admits a block of size $p$, then $\rho(G) = 1$. If $m\geq 2$, then it was also shown in \cite{razafimahatratra2021intersection} that $\rho(G) = 1$. Therefore, we only need to consider the cases where $G$ admits only trivial blocks or a unique block system where the blocks are of size $q$. Consequently, $G$ is one of the following types.
\begin{enumerate}[(I)]
	\item \underline{{\bf Quasiprimitive types}:} every non-trivial normal subgroup of $G$ is transitive. These groups can be further subdivided into the following.
	\begin{enumerate}[(a)]
		\item {\bf Primitive types:} $G$ only admits trivial blocks. Some groups of this type has been proved to have intersection density $1$ in \cite{razafimahatratra2021intersection}.
		\item {\bf Quasiprimitive types admitting non-trivial blocks:} $G$ admits a block system whose blocks are of size $q$ and whose kernel of the induced action on blocks is trivial.
	\end{enumerate}
	\item \underline{{\bf Genuinely imprimitive types:}} $G$ admits a block system with blocks of size $q$ whose kernel of the induced action on the blocks is non-trivial.
\end{enumerate}

\subsection{Cyclic Codes}\label{ssec:codesq}
Let $n \geq 1$ be a positive integer and $q$ be a prime number. The {\it Hamming weight} of $\mathbf{c} \in \mathbb{F}_q^n$, denoted by $w(\mathbf{c})$, is the number of non-zero entries of $\mathbf{c}$. We also define $Z(\mathbf{c})=n-w(\mathbf{c})$. The standard non-degenerate symmetric bilinear form $\langle \cdot, \cdot \rangle$ on $\mathbb{F}_q^n$ is given by:
$$
\langle \mathbf{c}, \mathbf{d} \rangle=c_1d_1+\dots+c_nd_n,
$$ 
for all $\mathbf{c}=(c_1,\dots,c_n), \mathbf{d}=(d_1,\dots,d_n) \in \mathbb{F}_q^n$. 
The symmetric group on $\{0,\dots,n-1\}$, which is denoted by  $S_n$, acts on $\mathbb{F}_q^n$ as follows:
$$
\mathbf{c}^\gamma=(c_{0^\gamma},\dots,c_{(n-1)^\gamma}),
$$
for $\mathbf{c}=(c_0,\dots,c_{n-1}) \in\mathbb{F}_q^n$ and $\gamma \in S_n$.

Now let $C$ be an $[n,k]_q$ {\it linear code} $C$ over $\mathbb{F}_q$, that is, a $k$-dimensional $\mathbb{F}_q$-subspace of $\mathbb{F}_q^n$. The dual $C^\perp$ of $C$ with respect to $\langle \cdot, \cdot \rangle$ is called {\it the dual code} of $C$. For any $\gamma \in S_n$, we let $C^\gamma := \{ \mathbf{c}^\gamma \mid \mathbf{c} \in C \}$. The {\it permutation automorphism group} of $C$ is
$$
\PAut(C)=\{ \gamma \in S_n \mid C^\gamma=C\}.
$$ 
Clearly, $\PAut(C)=\PAut(C^\perp)$.

From now on, we assume that $C$ is {\it cyclic}, that is, the {\it cyclic shift}  $\sigma=(0,1,\dots,n-1)^{-1}$ is in $\PAut(C)$, or equivalently,
$$
(c_0,\dots,c_{n-1}) \in C \Rightarrow (c_{n-1},c_0,\dots,c_{n-2}) \in C.
$$ In this case, it is well known that $C$ admits a so-called {\it parity-check polynomial} $h(t)=\sum_{i=0}^k a_it^i \in \mathbb{F}_q[t]$, which is a divisor of $t^n-1$. The polynomial $g(t)=\frac{t^n-1}{h(t)}$ is called the {\it generator polynomial} of $C$. Moreover, we have the following description:
\begin{align*}
	C &= \left\{ (c_0,c_1,\ldots,c_{n-1}) \in \mathbb{F}_q^n \mid  a_0 c_{i+k}+a_1c_{i-1+k} + \ldots + a_kc_{i} = 0,\mbox{ for }i \in \{0,1,\ldots,n-k-1\} \right\}.
\end{align*}
Given $\mathbf{c} \in C$, the \emph{minimal polynomial $f=\sum_{i=0}^{\ell-1}b_it^i+t^{\ell} \in \mathbb{F}_q[t]$ of $\mathbf{c}$} is the monic irreducible polynomial with least degree that is annihilated by $\mathbf{c}$, that is,
\begin{align*}
	b_0 c_{i+\ell}+b_1c_{i-1+\ell} + \ldots + b_{\ell-1}c_{i+1} + c_{i} = 0,
\end{align*}
for all $i \in \{0,1,\ldots,p-\ell-1\}$. Note that such a polynomial always exists and divides $h(t)$ since  $h(t)$ is annihilated by $\mathbf{c}$ (see \cite{niederreiter1977weights}).

\section{Quasiprimitive types}\label{sect:quasiprimitive}
In this section, we assume that $G$ is a quasiprimitive group. Clearly, its \emph{socle} $\soc(G)$ (i.e., the subgroup generated by all minimal normal subgroups) is transitive. One can then reduce the study of the intersection density of $G$ by considering its socle, since the latter is transitive. The socles of primitive groups of degree $pq$ were classified by Maru\v{s}i\v{c} and Scapellato in \cite{maruvsivc1994classifying}. In \cite{razafimahatratra2021intersection}, it was proved that the socles of primitive groups have intersection density $1$, except possibly for the ones given in Table~\ref{table:status}.

\begin{table}[H]
	\centering
	\begin{tabular}{|c|c|c|c|}
		\hline
		$\soc(G)$ & $(p,q)$ & action & Information \\
		\hline\hline
		$\po{2d}{2}$ & $\left(2^d - \varepsilon,2^{d-1} + \varepsilon\right)$ & $\begin{aligned}
			\mbox{singular}\\
			\mbox{ 1-spaces}
		\end{aligned}$ & $\begin{aligned}
			&\varepsilon = 1 \mbox{ and $d$ is a Fermat prime} \\ 
			&\varepsilon = -1 \mbox{ and $d-1$ is a Mersenne prime}
		\end{aligned}$\\
		\hline
		$\psl{2}{p}$ & $\left( p,\frac{p+1}{2} \right)$ & cosets of $D_{p-1}$ & $p\geq 13$ and $p\equiv 1 (\operatorname{mod}\ 4)$\\
		\hline
		$\psl{2}{q^2}$ & $\left( \frac{q^2+1}{2},q \right)$ & cosets of $\pgl{2}{q}$ & \\
		\hline
		$\psl{2}{61}$ & $ (61,31)$ & cosets of $\alt(5)$& \\
		\hline
	\end{tabular}
	\caption{The remaining simply primitive groups of degree $pq$.}\label{table:status}
\end{table}

It is in fact conjectured that the groups in Table~\ref{table:status} all have intersection density equal to $1$.
\subsection{A discussion regarding Remark~\ref{thm:quasiprimitive}}
Assume that $G$ is a quasiprimitive group which admits a unique block system $\mathcal{B}$ with blocks of size $q$. Recall that $\Ker(G\to \overline{G})$ is trivial. Consequently, the group $\overline{G}$ acts transitively and faithfully on $\mathcal{B}$. Further, if $\omega\in \Omega$ and $B\in \mathcal{B}$ contains $\omega$, then $G_\omega$ is a subgroup of $\overline{G}_B = G_{\{B\}}$ of index $q$, where $G_{\{B\}}$ is the setwise stabilizer of $B$ in $G$. In other words,
\begin{align}
	[G:G_{\{B\}}] = p \mbox{ and } [G_B:G_\omega] = q.\label{eq:stab-point-block}
\end{align} 

A consequence of the Classification of Finite Simple Groups is that all transitive groups of prime degree can be classified in terms of their socles. Since $G $ is isomorphic to $ \overline{G}$ which is transitive of degree $p$, the socle  of $G$ is one of the groups in Table~\ref{tab:tab1}.
\begin{table}[H]
	\centering
	\begin{tabular}{|c|c|c|}
		\hline
		Line&$\soc(G)$ & Comments \\ \hline 
		1 & $C_p$ & $p$ is prime\\ \hline
		2 & $\alt(p)$ & $p$ is prime \\ \hline
		3 & $\psl{2}{11} $ & degree $11$ (equivalently, acting on cosets of $\alt(5)$)\\ \hline
		4 & $\mathieu{11}$ & of degree $11$ \\ \hline
		5 & $\mathieu{23}$ & of degree $23$ \\ \hline
		6 & $\psl{d}{r^{d^m}}$ & $d\geq 2$ and $p = \frac{r^{d^{m+1}}-1}{r^{d^m}-1}$ where $r$ is a prime \\ \hline
	\end{tabular}
	\caption{Socles of transitive groups of prime degree $p$.}\label{tab:tab1}
\end{table}

We now give an analysis of the intersection density of the groups in Table~\ref{tab:tab1}. 
\begin{itemize}
	\item The group $C_p$ in line~1 cannot be transitive of degree $pq$. Hence, it cannot be quasiprimitive.
	\item For the group $ \psl{2}{11}$ in line~3, the only possibilities for $q$ are $3,5$, and $7$. The point stabilizer $\alt(5)$ of $ \psl{2}{11}$  does not have any subgroup of index in $\{3,7\}$. The alternating group $\alt(4)$ has index $5$ in $\alt(5)$. This gives rise to a transitive action of $\psl{2}{11}$ of degree $55$, whose intersection density is $1$.
	\item Similarly, the Mathieu group $\mathieu{11}$ in line~4 cannot have a quasiprimitive action of degree $q\times 11$, for any $q\in \{3,5,7\}$ since the point stabilizer of $\mathieu{11}$ in its natural action does not admit a subgroup satisfying \eqref{eq:stab-point-block}.
	\item The Mathieu group $\mathieu{23}$ in line~5 admits two transitive actions of degree $11\times 23$, both of which are primitive. There are no other quasiprimitive actions of degree a product of two odd primes.
\end{itemize}
The remaining groups in Table~\ref{tab:tab1} are considered separately.
\begin{lem}
	For any $n\geq 5$ and for any odd prime $q< n$, the group $\alt(n)$ does not admit a transitive action of degree $n\times q$, unless $(n,q) = (5,3)$.
\end{lem}
\begin{proof}
	The point stabilizer of $\alt(n)$ in its natural action on $\{1,2\ldots,n\}$ is $\alt(n-1)$. If $K\leq \alt(n-1)$ such that $[\alt(n-1):K] = q$ is an odd prime, then $\alt(n-1)$ acts transitively on $\alt(n-1)/K$ by left multiplication. If $n\geq 6$, then the group $\alt(n-1)$ is simple, so the kernel of this action of $\alt(n-1)$ is trivial. Consequently, $\alt(n-1)$ embeds into $\sym(\alt(n-1)/K)$ and so $\frac{(n-1)!}{2}\leq q!$, which is impossible. If $n = 5$, the kernel of $\alt(n-1) = \alt(4)$ contains the Klein group $C_2\times C_2$ which is of index $3$ in $\alt(4)$. Hence, $\alt(5)$ admits a transitive action of degree $15$.
\end{proof}
A direct consequence of the above lemma is that the group $\alt(p)$ in line~2 of Table~\ref{tab:tab1} admits a transitive  action of degree $p\times q$ only when $(p,q) = (5,3)$. This action is quasiprimitive and its intersection density can be easily verified to be equal to $1$ via \verb*|Sagemath|.

Therefore, an imprimitive quasiprimitive group of degree $pq$ is an almost simple group containing $\alt(5)$ (transitive of degree $15$), $\psl{2}{11}$ (transitive of degree $55$) or the groups in line~6 of Table~\ref{tab:tab1}.

\subsection{Proof of Theorem~\ref{thm:psl-case}}
Before proving the main result, let us first determine the point stabilizer of $\psl{k}{\ell}$ in its natural action of degree $p$.
Let $p$ be an odd prime such that $p = \frac{\ell^k-1}{\ell-1}$ where $\ell = r^{k^m}$, for some prime number $r$. Let $q$ be an odd prime less than $p$ and assume that $\psl{k}{\ell}$ admits an imprimitive action of degree $pq$, where the projective space $\pg{{k-1}}{\ell}$ of the vector space $\mathbb{F}_\ell^{k}$ is a complete block system. In order for $p= \frac{\ell^k-1}{\ell-1}$ to be a prime, $k$ must be a prime number and $\gcd(k,\ell-1) = 1$. The latter implies that $\psl{k}{\ell} = \sln{k}{\ell}$. A matrix in $\sln{k}{\ell}$ that fixes the $1$-dimensional subspace $\langle e_1\rangle$, where $e_1 \in \mathbb{F}_\ell^k$ is the vector whose first entry is $1$ and $0$ elsewhere, is of the form
\begin{align}
	\begin{bNiceArray}{c|cccc}
		\det(B)^{-1} & a_1 & a_2 & \ldots & a_{k-1}\\
		\hline
		\Block{1-1}<\Large>{\mathbf{0}} &\Block{1-4}<\Huge>{B}\\
	\end{bNiceArray}\label{eq:stab}
\end{align}
where $B\in \operatorname{GL}_{k-1}(\ell)$ and $\left[ a_1\ a_2\ \ldots \ a_{k-1}\right]^T \in \mathbb{F}_{\ell}^{k-1}$.
If $\sln{k}{\ell}$ admits a quasiprimitive action of degree $pq$ for some odd $q<p$, then by \eqref{eq:stab-point-block} the subgroup consisting of all elements of the form \eqref{eq:stab} admits a subgroup of index $q$.

For the remainder of this section, we consider the groups in line~6 of Table~\ref{tab:tab1} for $m = 0$ and $r = k =  q$. Recall that a \emph{Singer subgroup} of $\psl{n}{s}$, for any prime power $s$, is a cyclic subgroup of order $\frac{s^n-1}{s-1}\gcd(n,s-1)$. Let $T$ be a Singer subgroup of $\psl{k}{\ell}$. Since $\gcd(k,\ell-1) = 1$, a Singer subgroup of $\psl{k}{\ell} = \sln{k}{\ell}$  is a regular subgroup of order $p$. Let $\varphi \in \Aut(\mathbb{F}_{\ell^k}/\mathbb{F}_\ell)$ be the Frobenius automorphism of the field $\mathbb{F}_{\ell^k}$. As detailed in \cite[pg.~497]{hestenes1970singer}, an element of $\langle \varphi \rangle$ can be viewed as an invertible matrix over $\mathbb{F}_{\ell^k}$ and the latter induces a collineation of $\pg{k-1}{\ell}$. Let $\langle\psi\rangle$ be the group determined by these collineations. By \cite{hestenes1970singer}, the normalizer of $T$ in $\psl{k}{\ell}$ is given by
\begin{align}
	T\rtimes \langle \psi\rangle \cong C_p \rtimes C_k.
\end{align}
Moreover, $T\rtimes \langle \psi \rangle$ is a Frobenius group by \cite[Theorem~2.10]{hestenes1970singer}. 
By \cite[Lemma~2.6]{maruvsivc1992characterizing} and the fact that $k = q$, the subgroup $T\rtimes \langle \psi\rangle \cong C_p \rtimes C_q$ is a regular subgroup of $\psl{k}{\ell}$. By Lemma~\ref{lem:no-hom}, we conclude that $\rho(\psl{q}{\ell}) = 1$.

\section{Proofs of Theorem~\ref{thm:codes} and Theorem~\ref{thm:characterization}}\label{sect:code-density}
\subsection{Proof of Theorem~\ref{thm:codes}}
Throughout this section, we assume that $G\leq \sym(\Omega)$ is transitive of degree $pq$, where $p>q$ are two odd primes. In addition, we suppose that $\mathcal{B} = \{B_1,B_2,\ldots,B_p\}$ is the unique block system of $G$ and  $H = \Ker(G \to \overline{G})$ is non-trivial. Let $N$ be a minimal normal subgroup of $G$ contained in $H$.
\begin{thm}
	If $N$ is non-abelian, then $\rho(G)=1$.\label{thm:solvable}
\end{thm}
\begin{proof} 
	Since $N\ne 1$ and $q$ is prime, $N$ is transitive on each block in $\mathcal{B}$. By  \cite[Theorem 4.3.A (3)]{dixon1996permutation}, $N$ is a direct product of isomorphic non-abelian simple groups, that is, we can write 
	\begin{align}
		N = S_1\times S_2 \times \ldots \times S_k,\  (k \geq 1),\label{eq:socle} 
	\end{align}
	and there exists a simple group $S$ such that $S_i \cong S$ for all $i\in \{1,2,\ldots,k\}$.

	We identify $H$ (and thus $N$) as a subgroup of $\sym(q)^p$. For $1 \leq j \leq p$, let $N_j$ be the restriction of $N$ to the $j$-th block $B_j$ and denote by $\pi_j:N\twoheadrightarrow N_j \leq \sym(q)$ the canonical restriction. Recall that, by a result due to Burnside, the subgroup $N_j$ is a simple group for any $j \in \{1,2,\ldots,p\}$. If $K$ is any subgroup of $N$, we define 
	$$
	{\rm Supp}(K)=\{ j \in \{1,\dots,p\} \mid {\pi_j}_{|_K} \neq 1\}.
	$$ Moreover, if $K$ is a simple normal subgroup of $N$, then ${\pi_j}_{|_{K}}:K\rightarrow N_j$ is an isomorphism for all $j \in {\rm Supp}(K)$.
	Let $i \in \{ 1,\dots,k\}$. Set 
	$$
	J_i={\rm Supp}(S_i)=\{j \in \{1,\dots,p\} \mid {\pi_j}_{|_{S_i}}: S_i \rightarrow N_j\,\, \textrm{is an isomorphism}\}.
	$$
	\vskip 1mm
	\noindent
	{\bf Fact 1:}  {\it We claim that $J_i \neq \varnothing$, for any $i\in \{1,2,\ldots,k\}$.} 
	
	It is clear that there exists $j \in \{1,\dots, p\}$ such that $\pi_j(S_i) \neq 1$. Since $\pi_j$ is surjective and  $S_i\trianglelefteq N$, we have $\pi_j(S_i)\trianglelefteq N_j$, and so $\pi_j(S_i)=N_j$, as $N_j$ is simple. Now, since ${\rm Ker}({\pi_j}_{|_{S_i}})$ is a proper normal subgroup of $S_i$, we conclude that ${\rm Ker}({\pi_j}_{|_{S_i}})=1$. Therefore $j \in J_i$.
	\vskip 1mm
	\noindent
	{\bf Fact 2:} {\it We claim that}
	\begin{align}
		\bigcup_{i=1}^k J_i=\{1,\dots,p\},\label{eq:union} 
	\end{align} {\it therefore, all $N_j$ are non-abelian.} 

	Let $j\in \{1,2,\ldots,p\}$. By definition, ${\pi_j}_{|_N} = \pi_j \neq 1$, so there exists $i \in \{1,2,\ldots,k\}$ such that ${\pi_{j}}_{|_{S_i}} \neq 1$, or equivalently, $j \in J_i$.
	Therefore, we have $\{1,\dots,p\}={\rm Supp}(N)\subset \bigcup_{i=1}^k J_i$. The other inclusion is obvious.
	\vskip 1mm
	\noindent
	{\bf Fact 3:} {\it We claim that $J_i \cap J_{i'}=\varnothing$ when $i \neq i'$.}
	
	Suppose that $J_i \cap J_{i^\prime} \neq \varnothing$, and pick $j \in J_i \cap J_{i'}$. Since $S_i$ and $S_{i'}$ commute when viewed as subgroups of the direct product in \eqref{eq:socle}, $N_j=\pi_j(S_i)$ and $N_j=\pi_j(S_{i'})$ also commute, and so $N_j$ is abelian, which is a contradiction. Hence,
	\begin{align}
		J_i \cap J_{i'}=\varnothing \mbox{ whenever }i \neq i'.\label{eq:disjoint}
	\end{align}
	\vskip 1mm
	\noindent
	{\bf Fact 4:} {\it We claim that $|J_i|$ does not depend on $i$.} 
	
	Let $i,i' \in \{1,\dots,k\}$. Since $G$ is transitive and $N$ is normal, there exists $g \in G$ such that ${\rm Supp}(gS_i g^{-1}) \cap {\rm Supp}(S_{i'}) \neq \varnothing$. Since $gS_ig^{-1} \trianglelefteq N$ and  $S_{i'}\trianglelefteq N$, either $gS_ig^{-1} \cap S_{i'}=1$ or $gS_ig^{-1} = S_{i'}$. In the first case, $gS_ig^{-1}$ and $S_{i'}$ commute, which implies that, for $j \in {\rm Supp}(gS_i g^{-1}) \cap {\rm Supp}(S_{i'})$, $N_j$ is abelian. Hence, we get a contradiction. Therefore, we must have $gS_ig^{-1} = S_{i'}$. Since $g$ permutes the blocks, the latter implies that
	$$
	|J_i|=|{\rm Supp}(S_i)|=|{\rm Supp}(gS_ig^{-1})| = |{\rm Supp}(S_{i'})|=|J_{i'}|.
	$$ 
	Consequently, there exists a positive integer $m$ such that
	\begin{align}
		\mbox{ $|J_i|$$=m$, for all $i\in \{0,1,\ldots,k\}$.}\label{eq:constant}
	\end{align}
	
	Therefore, by \eqref{eq:union}, \eqref{eq:disjoint}, and \eqref{eq:constant} we have $km=p$. As $p$ is prime, we either have $k=1$ or $k = p$. 
	\vskip 1mm
	\noindent{\bf Fact 5:} {\it $N$ admits a derangement which is a product of $p$ $q$-cycles.} 
	
	Assume that $k=1$. Notice that ${\rm Supp}(N)=\{1,\dots,p\}$, and that, $\pi_j: N \rightarrow N_j$ is an isomorphism, for all $j \in \{1,\dots,p\}$. Since $q$ divides $|N|$, there exists a permutation $\sigma=(\sigma_1,\sigma_2,\dots,\sigma_p)\in N\setminus \{1\}$ of order $q$ (viewed as an element of $\sym(q)^p$). Therefore, $\sigma_j$ is a $q$-cycle or $1$, for any $j\in \{1,2,\ldots,p\}$. However, for $j \in \{1,\dots,p\}$, the map $\pi_j$ is injective, so we have
	$$
	1=\pi_j(1,\dots,1)\neq\pi_j(\sigma)=\sigma_j.
	$$ 
	Hence, all $\sigma_j$ are $q$-cycles, and so $\sigma$ is a product of $p$ $q$-cycles. In other words, $\sigma$ is a semiregular element of $N$.
	
	Next, assume that $k =p$. Since $m = 1$,  each $S_i$ is a subgroup of the $i$-th factor $\sym(q)$. Therefore, 
	$$\{S_1,\dots,S_p\}=\{N_1,\dots, N_p\}.
	$$
	As each $N_i$ is transitive on the block $B_i$, for any $i\in \{1,2,\ldots,p\}$, $N_i$ admits an element of order $q$. From this we deduce that $N = S_1 \times  \ldots \times S_p$ admits an element which is a product of $p$ $q$-cycles, i.e., a semiregular element.
	\vskip 1mm
	\noindent {\bf Fact 6:} {\it The intersection density of $G$ is equal to $1$.}
	
	Since $N$ admits a semiregular element of order $q$ and $N\leq H$, we conclude by Lemma~\ref{lem:semi-regular} that $\rho(G) \leq \rho(\overline{G})$. Since $\overline{G}$ is transitive of prime degree, we have $\rho(\overline{G}) = 1$, which implies $\rho(G) = 1$.
\end{proof}

This proves Theorem~\ref{thm:codes}~\eqref{i}. For the remainder of this section, we consider the case when $N$ is an elementary abelian $q$-group. Before proving the next result, we recall the following lemma, whose proof is straightforward.
\begin{lem}\label{lem:formder}
	Let $\sigma= (0,\dots,q-1)$. The normalizer of $\langle \sigma \rangle$ in $\sym(q)$ is
	$$
	N_{\sym(q)}(\langle \sigma \rangle)=\left\lbrace \tau_{a,i}=
	\begin{pmatrix}
		0 & 1 & \dots & q-1  \\
		a & a+i & \dots & a+(q-1)i
	\end{pmatrix} \mid 0 \leq a \leq q-1, 1 \leq  i \leq q-1
	\right\rbrace.
	$$ Moreover, the derangements in $N_{\sym(q)}(\langle \sigma \rangle)$ are exactly $\sigma,\sigma^2,\dots,\sigma^{q-1}$.
\end{lem}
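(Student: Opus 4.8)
The plan is to identify the ground set $\{0,1,\ldots,q-1\}$ with the field $\mathbb{F}_q$ (legitimate precisely because $q$ is prime) and to recognise $\langle\sigma\rangle$ as the translation group $\{x\mapsto x+a \mid a\in\mathbb{F}_q\}$, i.e.\ the additive group of $\mathbb{F}_q$ acting regularly. Under this dictionary the asserted normaliser is exactly the one-dimensional affine group $\agl{1}{q}$, whose elements are the maps $x\mapsto ix+a$ with $i\in\mathbb{F}_q^\times$ and $a\in\mathbb{F}_q$; these are precisely the $\tau_{a,i}$ written in two-line notation as $j\mapsto a+ij$. So the statement splits into an equality of subgroups of $\sym(q)$ and then a count of the derangements among the $\tau_{a,i}$.

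For the inclusion $\{\tau_{a,i}\}\subseteq N_{\sym(q)}(\langle\sigma\rangle)$ I would compute the conjugate directly: writing $\sigma(x)=x+1$ and $\tau_{a,i}(x)=ix+a$, a one-line calculation gives $\tau_{a,i}\,\sigma\,\tau_{a,i}^{-1}=\sigma^{i}\in\langle\sigma\rangle$, so each $\tau_{a,i}$ normalises $\langle\sigma\rangle$. Here I also note that the constraint $1\le i\le q-1$ forces $i\in\mathbb{F}_q^\times$, which is exactly what makes $\tau_{a,i}$ a genuine permutation; this is one place where primality of $q$ is used. For the reverse inclusion I would take any $\tau\in N_{\sym(q)}(\langle\sigma\rangle)$: since conjugation preserves order, $\tau\sigma\tau^{-1}$ is again an element of order $q$ in $\langle\sigma\rangle$, hence $\tau\sigma\tau^{-1}=\sigma^i$ for some $i\in\{1,\ldots,q-1\}$. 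Setting $a:=\tau(0)$ and using the relation $\tau\sigma=\sigma^i\tau$, an immediate induction on $x$ yields $\tau(x+1)=\tau(x)+i$, hence $\tau(x)=ix+a$, i.e.\ $\tau=\tau_{a,i}$.

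For the derangement count I would characterise fixed points: $\tau_{a,i}$ fixes $x$ if and only if $(i-1)x=-a$ in $\mathbb{F}_q$. When $i\neq 1$ the coefficient $i-1$ is a nonzero element of the field $\mathbb{F}_q$, hence invertible, so this equation has the unique solution $x=(i-1)^{-1}(-a)$ and $\tau_{a,i}$ possesses a fixed point. When $i=1$ we have $\tau_{a,1}(x)=x+a=\sigma^a(x)$, which is a derangement precisely when $a\neq 0$; running over $a\in\{1,\ldots,q-1\}$ gives exactly $\sigma,\sigma^2,\ldots,\sigma^{q-1}$. Combining the two cases identifies the derangements as claimed.

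There is no serious obstacle here, in keeping with the remark that the proof is straightforward; the only points needing care are the bookkeeping around the composition convention in the conjugation identity $\tau_{a,i}\,\sigma\,\tau_{a,i}^{-1}=\sigma^{i}$, and the recognition that primality of $q$ is invoked twice: once so that every $\tau_{a,i}$ with $1\le i\le q-1$ is a bijection, and once so that $i-1$ is invertible whenever $i\neq 1$.
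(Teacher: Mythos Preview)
Your proof is correct. The paper does not actually supply a proof of this lemma---it merely introduces it with the phrase ``whose proof is straightforward''---so there is no argument to compare against; your identification of $N_{\sym(q)}(\langle\sigma\rangle)$ with $\agl{1}{q}$ and the fixed-point analysis via $(i-1)x=-a$ in $\mathbb{F}_q$ is exactly the standard route and fills in the omitted details cleanly.
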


\begin{thm}
	Assume that $N$ is an elementary abelian $q$-group and $H$ contains a derangement $x$, then $\rho(G)=1$. \label{lem:existence-q-cycles-derangements}
\end{thm}
\begin{proof}
	We claim that $x$ is necessarily a product of $p$ $q$-cycles. First, we identify $H$ as a subgroup of $\sym(q)^p\leq \sym(\Omega)$.
	For $1 \leq i \leq p$, let $\pi_i:\sym(q)^p\twoheadrightarrow \sym(q)$ be the $i$-th projection. Since $1 \neq N\trianglelefteq G$ and $G$ acts transitively on $\Omega$, we get $\pi_i(N)\neq 1 $ ($1 \leq i \leq p$) and $\pi_i(N)$ is an elementary abelian $q$-group of $\sym(q)$. Such a subgroup of $\sym(q)$ has to be cyclic. Therefore, there is a $q$-cycle $\ell_i$ ($1 \leq i \leq p$) such that $\pi_i(N)=\langle \ell_i \rangle$. 
	
	Write $x=(x_1,\dots,x_p) \in \sym(q)^p$, where $x_i$ belongs to the $i$-th factor $\sym(q)$. Now let $1 \leq i \leq p$. Since $N \trianglelefteq H$ and $x \in H$, we get 
	$$\langle \ell_i \rangle=\pi_i(N)=\pi_i(xNx^{-1})=\pi_i(x)\pi_i(N)(\pi_i(x))^{-1}=x_{i}\langle \ell_i \rangle x_{i}^{-1}.
	$$ 
	Therefore, $x_i \in N_{\sym(q)}(\langle \ell_i \rangle)$. Since $x_i$ is a derangement, $x_i$ is a $q$-cycle by Lemma \ref{lem:formder}. Consequently, $x$ is a product of $p$ $q$-cycles.  It is not hard to see that $x$ is a semiregular element such that the set of orbits of $\langle x\rangle$ coincide with $\mathcal{B}$. Therefore, $\rho(G) \leq \rho(\overline{G}) = 1$ by Lemma~\ref{lem:semi-regular}.
\end{proof}
\subsection{Proof of Theorem~\ref{thm:characterization}}\label{sec:conimcod}
First, we recall the construction from \cite{hujdurovic2021intersection}. Let $C$ be a $[p,k]_q$ cyclic code satisfying 
\begin{equation}\label{eq:condzero}
	Z({\bf c})>0\,\,\,\, \textrm{for all}\,\, {\bf c} \in C. 
\end{equation} 
Consider $\Omega=\mathbb{Z}_q \times \mathbb{Z}_p$.
Let $\alpha$ be the permutation of $\Omega$ given by $\alpha: (i,j) \mapsto (i,j+1)$, and for $\mathbf{c} =(c_0,c_1,\ldots,c_{p-1}) \in C$, we let $\beta_{\mathbf{c}}$ be the permutation of $\mathbb{Z}_q \times \mathbb{Z}_p$ such that $\beta_{\mathbf{c}} : (i,j) \mapsto (i+c_j,j)$. Now, define $S = \{ \beta_{\mathbf{c}} \mid \mathbf{c}\in C \}$, $H(C) := \langle S \rangle$, and $G(C) := \langle S \cup \{ \alpha \} \rangle$. It is clear that $G(C)$ is transitive on $\Omega$ and $H(C) \trianglelefteq G(C)$. The orbits of $H(C)$ are of the form  $B_j= \mathbb{Z}_q \times \{ j \}$, for $ 0 \leq j \leq p-1$. We note that for any $\mathbf{c},\mathbf{c}^\prime \in C$, we have
\begin{align}
	\beta_{\mathbf{c}} \beta_{\mathbf{c}^\prime} = \beta_{\mathbf{c}+\mathbf{c}^\prime}.\label{eq:product}
\end{align}
A straightforward computation, using \eqref{eq:product}, proves that 
\begin{align*}
	\Fix(G(C)) = H(C). 
\end{align*}
Moreover, $H(C)$ is derangement-free since $Z(\mathbf{c})>0$, for all $\mathbf{c} \in C$. Hence, $\Gamma_{G(C)}$ is a complete $p$-partite graph and therefore $\rho(G(C)) = q$. Consequently, we have the following result.
\begin{thm}[\cite{hujdurovic2021intersection}]
	If $C$ is a $[p,k]_q$ cyclic code such that $Z(\mathbf{c})>0$, for all $\mathbf{c} \in C$, then there exists an imprimitive group $G(C)$ whose intersection density is equal to $q$.\label{lem:min-density-q}
\end{thm}

Below, we give a proof of Theorem~\ref{thm:characterization}, which in a certain sense, gives a converse to Theorem \ref{lem:min-density-q}.

\begin{thm}\label{thm:converse}
		Let $G \leq {\rm Sym}(\Omega)$ be a degree $pq$ ($p> q$ odd primes) transitive group admitting a unique block system $\mathcal{B}$ whose blocks are of size $q$. Assume that the kernel $H = \Ker(G\to \overline{G})$ is nontrivial and $G$ is minimally transitive with intersection density $\rho(G) = q$. Then, there is a $[p,k]_q$ cyclic code $C$  such that the Hamming weight $w(\mathbf{c})$ is not equal to $p$, for all $\mathbf{c}\in C$.
\end{thm}
\begin{proof}
	Since $H= {\rm Ker}(G \rightarrow \overline{G})$ is nontrivial, there exists a minimal normal subgroup $N$ of $G$ contained in $H$. By Theorem~\ref{thm:codes}, $N$ is an elementary abelian $q$-group and $H$ is derangement-free. Now let $g$ be a derangement of order $p$, which exists by \cite{maruvsivc1981vertex}. Since $\langle N,g \rangle$ is transitive and $G$ is minimally transitive, we deduce that $N=H$.
	Now we are going to construct a $[p,k]_q$ ($k=\log_q(|N|)$) cyclic code $C$ with the property that $Z({\bf c})>0$, for all ${\bf c} \in C$.
	
	Relabel the blocks of $\mathcal{B}$ by $B_0, \dots, B_{p-1}$ in order to obtain $g(B_j)=B_{j+1}$, for $0 \leq j \leq p-1$ and $g(B_{p-1}) = B_0$. Consequently, we may identify $N$ with a subgroup $\langle \sigma_0\rangle \times \dots \times \langle\sigma_{p-1}\rangle$, for some $q$-cycles $\sigma_j \in {\rm Sym}(B_j)$. Moreover, we may assume that $g\sigma_j g^{-1}=\sigma_{j+1}$ for $0 \leq j \leq p-1$. Now write $B_0=\{b_{0,0},\dots,b_{q-1,0}\}$ and suppose that  $\sigma_0=(b_{0,0}\ \dots\ b_{q-1,0})$. For $0 \leq i \leq q-1$ and $0 \leq j \leq p-1$, let $b_{i,j}=b_{i,0}^{g^{j}}$. In this case, we have $B_j=\{b_{0,j},\dots,b_{q-1,j} \}$, for all $0 \leq j \leq p-1$. A direct computation shows that $\sigma_j=(b_{0,j}\ \dots\ b_{q-1,j})$, for all $0 \leq j \leq p-1$. 
	Consider the group isomorphism $\Psi : (a_0,\dots,a_{p-1}) \in \mathbb{F}_q^p=\mathbb{Z}_q^p \mapsto (\sigma_0^{a_0},\dots, \sigma_{p-1}^{a_{p-1}}) \in 
	\langle \sigma_0\rangle \times \dots \times \langle\sigma_{p-1}\rangle$. In this situation $C=\Psi^{-1}(N)$ is a $[p,k]_q$-cyclic code, where $k=\log_q(|N|)$. 
	
	The map $\varphi : b_{i,j} \in \Omega \mapsto (i,j) \in \mathbb{Z}_q \times \mathbb{Z}_p$ is a bijection. By identifying $b_{i,j}$ and $(i,j)$, we deduce an action of $C$ on $\mathbb{Z}_q\times \mathbb{Z}_p$ as follows: 
	$$
			(i,j)^{\Psi(\mathbf{c})} := (i+c_j,j), \mbox{ for }\mathbf{c} = (c_0,c_1,\ldots,c_{p-1}) \in C.
	$$
	For $\mathbf{c} = (c_0,c_1,\ldots,c_{p-1}) \in C$, the element $\Psi(\mathbf{c}) \in N$ admits a fixed point, so there exists $(i,j) \in \mathbb{Z}_q \times \mathbb{Z}_p$ such that $(i,j)^{\Psi(\mathbf{c})} = (i,j)$. Hence, we have $c_j = 0$. It follows that  $Z({\bf c}) >0$ for all ${\bf c} \in C$. 
\end{proof}

\begin{rmk}
The minimal transitivity condition in Theorem \ref{thm:converse} is not necessary. However, one can check that, this condition allows us to recover the structure of $G$ from the constructed code $C$. Indeed, in this case, $G$ is permutation equivalent to the group $G(C)$.
\end{rmk}

\section{Proof of Theorem~\ref{thm:main}}\label{sect:rational}
In this section, we prove Theorem~\ref{thm:main}. Our proof relies on the existence of certain cyclic codes. Now, let $p$ and $q$ be two odd primes such that $p = \frac{q^k-1}{q-1}$ for some prime $k$. Recall that $k$ is the smallest positive integer such that $q^k \congr{1}{p}$. Recall also that $\Phi_p(t)$ is the product of $\frac{\phi(p)}{k} = \frac{p-1}{k}$ monic irreducible polynomials over $\mathbb{F}_q$ of degree $k$. Hence, the splitting field of $\Phi_p(t)$ is $\mathbb{F}_{q^k}$. 

Let $Z \subset \left\{ x\in \mathbb{F}_{q^k} \mid x^p = 1 \right\}$. The \emph{cyclic code with defining zeroes $Z$} is the cyclic code
\begin{align*}
	C_{q,k}(Z) &= \left\{ (c_0,c_1,\ldots,c_{p-1}) \in \mathbb{F}_q^p \mid \sum_{i=0}^{p-1} c_i x^i =0,\ \mbox{ for all } x\in Z \right\}.
\end{align*}
Since $\gcd(p,q)=1$, the map $\varphi:\{ 0,1,\ldots,p-1 \} \to \{0,1,\ldots,p-1\}$ given by: 
\begin{align*}
	i^{\varphi} \congr{qi}{p}
\end{align*} belongs to $S_p$.
Let $\mathbf{c} = (c_0,c_1,\ldots,c_{p-1}) \in C_{q,k}(Z)$ and $x\in Z$. If we denote $\mathbf{c}(x) = c_0 + c_1x + \ldots + c_{p-1}x^{p-1}$, we have %.
\begin{align*}
	\mathbf{c}^\varphi(x)=\mathbf{c}(x^{q^{k-1}})=\mathbf{c}(x)^{q^{k-1}}=  0 .
\end{align*}
This is equivalent to saying that $\mathbf{c}^\varphi \in C_{q,k}(Z).$
In other words, $\varphi \in \PAut(C_{q,k}(Z))$.  The permutation automorphism $\varphi$ is called the \emph{Frobenius automorphism }of $C_{q,k}(Z)$. Moreover, since $k$ is the smallest positive integer such that $q^k \congr{1}{p}$, the order of $\varphi$ is $k$.  See \cite{hollmann2008nonstandard} for details.

\subsection{An extension from the Frobenius permutation automorphism}		
Let $\zeta$ be any root of $\Phi_p(t)$ with minimal polynomial $g(t) \in \mathbb{F}_q[t]$. The generator polynomial of $C_{q,k}(\{ \zeta \})$ is clearly $g(t)$, and so $C_{q,k}(\{ \zeta \})$ is a $[p,p-k]_q$ cyclic code. Now, let $C = C_{q,k}(\{ \zeta \})^\perp$ be the dual code of $C_{q,k}(\{ \zeta \})$. Observe that $\PAut(C)=\PAut(C_{q,k}(\{ \zeta \}))$ and $C$ is a $[p,k]_q$ cyclic code.

Next, we consider the group $K = \langle \sigma, \varphi\rangle \leq \PAut(C)$,  where $\sigma$ is the cyclic shift and $\varphi$ is the Frobenius automorphism. By noting that $j^{\varphi^{-1} \sigma \varphi} \congr{j+q}{p}$ for any $j \in \{0,\dots, p-1\}$, we conclude that $\varphi^{-1}\sigma \varphi = \sigma^q$. In other words, $\varphi$ normalizes $\sigma$. Moreover, since $p$ and $q$ are distinct primes, we have $\langle \sigma \rangle \cap \langle \varphi \rangle = \{id\}$. Therefore, $K = \langle \sigma \rangle \rtimes \langle \varphi\rangle \cong C_p \rtimes C_k$.  

Let $\alpha$ and $\beta_\mathbf{c}$ for $\mathbf{c} \in C$ be the permutations of $\Omega_{p,q} = \mathbb{Z}_q \times \mathbb{Z}_p$ defined in the paragraph before Theorem~\ref{lem:min-density-q}. Recall that $H(C)$ is the subgroup generated by $\{ \beta_{\mathbf{c}} \mid \mathbf{c} \in C \}$ and $G(C) = H(C) \rtimes \langle \alpha\rangle$. In addition, we define the permutation $\psi$ of $\Omega_{p,q}$ by $(i,j)^\psi = (i,j^\varphi)$, where $\varphi$ is the Frobenius automorphism of $C$. For any $(i,j)\in \Omega_{p,q}$ and $\mathbf{c} \in C$, we have
\begin{align*}
	\begin{cases}
		(i,j)^{\psi^{-1} \alpha \psi} &= (i,j)^{ \alpha^{q}}\\
		(i,j)^{\psi^{-1} \beta_{{\mathbf{c}}} \psi}&= \left( i+ v_{j^{\varphi^{-1}}},j \right) = (i,j)^{\beta_{\mathbf{c}^{\varphi^{-1}}}}.
	\end{cases}
\end{align*}
As $\varphi \in \PAut(C)$, we conclude that $\psi$ normalizes $G(C)$. In particular, if we let $M(C) := \langle G(C),\psi \rangle$,  noting that $G(C) \cap \langle \psi \rangle=\{id\}$, we have $M(C) = H \rtimes \left( \langle \alpha \rangle \rtimes \langle \psi\rangle \right) \cong \mathbb{F}_q^k \rtimes \left( C_p \rtimes C_k\right)$.
We note that $\psi$ fixes any $(i,0)$ for $i \in \mathbb{Z}_q$, so $\psi$ fixes the block $\mathbb{Z}_q \times \{0\}$ pointwise.

\subsection{The Derangement graph of $M(C)$}\label{sect:der-graph}
In this section, we will study the derangement graph of $M(C)$ defined in the previous section. The group $M:=M(C) = H \rtimes \left( \langle \alpha\rangle \rtimes \langle \psi\rangle \right)$ acts imprimitively on $\Omega_{p,q}$ with block system $\mathcal{B} = \left\{ \mathbb{Z}_q \times \{ j \} \mid j \in \{0,1,\ldots,p-1\} \right\}$. For any $j\in \{0,1,\ldots,p-1\}$, we let $B_j:= \mathbb{Z}_q \times \{ j \}$. We also prove the following theorem.
\begin{thm}
	The intersection density of $M(C)$ is equal to $\max(1,\frac{q}{k})$.\label{thm:density-MC}
\end{thm}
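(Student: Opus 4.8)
The plan is to compute $\alpha(\Gamma_M)$ exactly and divide by the order of a point stabiliser. Since $|M|=q^k p k$ and $|\Omega_{p,q}|=pq$, any point stabiliser $M_\omega$ has order $q^{k-1}k$, so that $\rho(M)=\alpha(\Gamma_M)/(q^{k-1}k)$. For the lower bound I would note two intersecting sets: the stabiliser $M_\omega$ itself is intersecting of size $q^{k-1}k$, giving $\rho(M)\geq 1$; and $H=\{\beta_{\mathbf c}\mid \mathbf c\in C\}$ is derangement-free, because every $\mathbf c\in C$ satisfies $Z(\mathbf c)=p-q^{k-1}>0$, so $H$ is an intersecting set of size $q^k$, giving $\rho(M)\geq q/k$. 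Hence $\rho(M)\geq \max(1,q/k)$, and it remains to prove the matching upper bound $\alpha(\Gamma_M)\leq \max(q^k,kq^{k-1})$.

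\textbf{Decomposition into $\psi$-levels.} Next I would split $M$ into the $k$ cosets (``levels'') $G(C)\psi^s$, $s\in\mathbb Z_k$. As $\Gamma_M$ is a Cayley graph, right multiplication by $\psi^s$ is a graph automorphism carrying $G(C)$ onto $G(C)\psi^s$, so the subgraph induced on level $s$ is isomorphic to $\Gamma_{G(C)}$. By the analysis of \S\ref{ssec:original}, $\Fix(G(C))=H$ is derangement-free, so $\Gamma_{G(C)}$ is complete $p$-partite with parts the $H$-cosets $H\alpha^t$; transporting by $\psi^s$, the subgraph induced on level $s$ is complete $p$-partite with parts $H\alpha^t\psi^s$. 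Consequently any coclique $\mathcal F$ of $\Gamma_M$ meets each level in a subset of a single part, which I would record as $\mathcal F\cap (G(C)\psi^s)=\{\beta_{\mathbf a}\alpha^{t_s}\psi^s\mid \mathbf a\in A_s\}$ for some $t_s\in\mathbb Z_p$ and $A_s\subseteq C$.

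\textbf{The cross-level constraint.} The heart of the argument is to control the edges between distinct levels $s\neq s'$. Using $(i,j)^{\beta_{\mathbf a}\alpha^{t_s}\psi^s}=(i+a_j,\,q^s(j+t_s))$, two elements $\beta_{\mathbf a}\alpha^{t_s}\psi^s$ and $\beta_{\mathbf b}\alpha^{t_{s'}}\psi^{s'}$ agree on a point precisely when the block equation $q^s(j+t_s)\equiv q^{s'}(j+t_{s'})\ (\mathrm{mod}\ p)$ holds together with $a_j=b_j$. Because $q^s\not\equiv q^{s'}\ (\mathrm{mod}\ p)$ for $s\neq s'$ (the order of $q$ modulo $p$ is $k$), the block equation has a \emph{unique} solution $j_0=j_0(s,s',t_s,t_{s'})$, so agreement is equivalent to $a_{j_0}=b_{j_0}$. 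Since $\mathcal F$ is intersecting, every $\mathbf a\in A_s$ and $\mathbf b\in A_{s'}$ must agree, which forces $a_{j_0}$ to take a single value over all of $A_s$. As $\mathbf c\mapsto c_{j_0}$ is a nonzero linear functional on the nontrivial cyclic code $C$, the fibre $\{\mathbf c\in C\mid c_{j_0}=v\}$ has size $q^{k-1}$; hence $|A_s|\leq q^{k-1}$ whenever at least one other level is nonempty.

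\textbf{Conclusion and main obstacle.} Let $a$ be the number of levels $s$ with $A_s\neq\varnothing$. If $a=1$ then $|\mathcal F|=|A_s|\leq q^k$, while if $a\geq 2$ then each active $A_s$ satisfies $|A_s|\leq q^{k-1}$ by the previous paragraph, so $|\mathcal F|=\sum_s|A_s|\leq a\,q^{k-1}\leq kq^{k-1}$. Therefore $\alpha(\Gamma_M)\leq\max(q^k,kq^{k-1})$, matching the lower bound, and
$$
\rho(M)=\frac{\max(q^k,kq^{k-1})}{q^{k-1}k}=\max\!\left(\frac{q}{k},1\right).
$$
I expect the two delicate points to be (i) justifying that each level induces a complete multipartite graph, which I would reduce cleanly to the already-established structure of $\Gamma_{G(C)}$ via the Cayley automorphism given by right multiplication by $\psi^s$; and (ii) the cross-level agreement computation that isolates the single coordinate $j_0$ on which a coclique forces $A_s$ to be constant. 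Once these are in place, the dichotomy $a=1$ versus $a\geq 2$ makes the optimisation immediate, so no heavy coding-theoretic input beyond $Z(\mathbf c)>0$ and the non-degeneracy of the coordinate functionals is needed.
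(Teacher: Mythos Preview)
Your proposal is correct and follows essentially the same route as the paper. Both arguments decompose $M$ into the $k$ ``$\psi$-levels'' $\bigcup_s H\alpha^s\psi^t$, observe that each level induces a complete $p$-partite graph with parts the $H$-cosets (so a coclique meets each level inside a single coset), and then use the unique solution $j_0$ of the block equation $q^s(j+t_s)\equiv q^{s'}(j+t_{s'})\pmod p$ to pin down a single coordinate on which two active levels must agree, yielding $|A_s|\le q^{k-1}$ whenever a second level is active; the dichotomy ``one active level'' versus ``at least two'' then gives $\alpha(\Gamma_M)\le\max(q^k,kq^{k-1})$ exactly as in the paper's Claims~1--2 of \S6.2.
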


We begin with the following easy lemma.
\begin{lem}\label{lem:eas}
	If $t \in \{1,\dots,k-1\}$, then $q^t-1$ is invertible modulo $p$.
\end{lem}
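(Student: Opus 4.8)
The plan is to reduce the statement to a divisibility fact and then invoke the order of $q$ modulo $p$. Since $p$ is prime, an element $q^t-1$ is invertible modulo $p$ if and only if $p \nmid q^t-1$; equivalently, $q^t \not\equiv 1 \pmod{p}$. So the whole claim amounts to showing that $q^t \not\equiv 1 \pmod p$ for every $t \in \{1,\dots,k-1\}$.

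First I would recall that $k$ is precisely the multiplicative order of $q$ modulo $p$. On the one hand $q^k \equiv 1 \pmod p$, because $q^k - 1 = (q-1)\,p$ by the defining relation $p = \frac{q^k-1}{q-1}$. On the other hand, as already noted in the excerpt, $k$ is the \emph{smallest} positive integer with $q^k \equiv 1 \pmod p$; equivalently, since $k$ is prime the order of $q$ is either $1$ or $k$, and it cannot be $1$ because $p = 1+q+\dots+q^{k-1} > q-1$ forces $p \nmid q-1$. Hence the order of $q$ modulo $p$ equals $k$.

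With this in hand the conclusion is immediate: for $1 \leq t \leq k-1$ the exponent $t$ is strictly smaller than the order $k$, so $q^t \not\equiv 1 \pmod p$, i.e. $p \nmid q^t-1$. As $p$ is prime, this yields $\gcd(q^t-1,p)=1$, so $q^t-1$ is a unit in $\mathbb{Z}/p\mathbb{Z}$. I expect no real obstacle here; the only point requiring a word of justification is that $k$ is genuinely the order of $q$ rather than merely a multiple of it, and that is guaranteed by the primality of $k$ together with $p > q-1$.
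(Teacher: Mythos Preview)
Your argument is correct and is essentially the same as the paper's: both reduce the claim to the fact that $k$ is the multiplicative order of $q$ modulo $p$, which the paper simply quotes while you spell out the justification. There is nothing to add.
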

\begin{proof}
	This follows from the fact that $k$ is the smallest positive integer such that $p \mid q^k-1$.
\end{proof}

\begin{lem}
	For $s \in \{0,\dots,p-1\}$ and $t \in \{0, \dots,k-1\}$ such that $\alpha^s\psi^t\neq 1$, we have
	\begin{align}
		\alpha^s\psi^t \mbox{ fixes a block }\Leftrightarrow \alpha^s\psi^t \mbox{ fixes a unique block } \Leftrightarrow t\neq 0.\label{eq:rmk-fixed-blocks}
	\end{align}
	Moreover, if $\alpha^s\psi^t$ fixes $B\in \mathcal{B}$, then  $\alpha^s \psi^t$ fixes $B$ pointwise. 
\end{lem}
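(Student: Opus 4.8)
The plan is to reduce the entire statement to the induced affine action of $\alpha^s\psi^t$ on the index set $\mathbb{Z}_p$ of the block system $\mathcal{B}$, exploiting that neither $\alpha$ nor $\psi$ ever moves the first coordinate. First I would use $(i,j)^\alpha=(i,j+1)$, $(i,j)^\psi=(i,j^\varphi)$ and $j^\varphi\equiv qj\pmod p$, together with the fact that these are right actions (so $\alpha^s$ is applied before $\psi^t$), to establish the formula
\[
(i,j)^{\alpha^s\psi^t}=\bigl(i,\,q^t(j+s)\bigr),
\]
the second coordinate being read modulo $p$. This shows at once that the first coordinate is fixed by every element of $\langle\alpha\rangle\rtimes\langle\psi\rangle$, and that the induced permutation of the $p$ blocks is the affine map $\overline{\jmath}\mapsto q^t\overline{\jmath}+q^ts$ of $\mathbb{Z}_p$, whose linear part is multiplication by $q^t$.

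Next I would read off the fixed blocks from the congruence $q^t(j+s)\equiv j\pmod p$, that is $(q^t-1)j\equiv -q^ts\pmod p$. When $t=0$ the linear coefficient vanishes, so the congruence becomes $s\equiv 0\pmod p$; but $\alpha^s\psi^t=\alpha^s\neq 1$ forces $s\not\equiv 0$, hence no block is fixed. When $t\in\{1,\dots,k-1\}$, Lemma~\ref{lem:eas} guarantees that $q^t-1$ is invertible modulo $p$, so the congruence has exactly one solution $j\in\mathbb{Z}_p$, i.e. a unique fixed block. Putting the two cases together delivers the stated equivalences: $\alpha^s\psi^t$ fixes a block if and only if it fixes a unique block if and only if $t\neq 0$.

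For the final assertion I would simply reuse the formula of the first step: if $\alpha^s\psi^t$ fixes $B_j$ setwise then $q^t(j+s)\equiv j\pmod p$, and since the first coordinate is never moved, every point $(i,j)\in B_j$ satisfies $(i,j)^{\alpha^s\psi^t}=(i,q^t(j+s))=(i,j)$, so $B_j$ is fixed pointwise.

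I do not expect a genuine obstacle here, since the argument is elementary once the affine formula is in hand. The only points demanding care are the composition order when deriving that formula and the precise range $1\le t\le k-1$ in which Lemma~\ref{lem:eas} supplies the invertibility of $q^t-1$; the pointwise-fixing is then immediate from the structural fact that the subgroup $\langle\alpha\rangle\rtimes\langle\psi\rangle$ acts trivially on the fibre $\mathbb{Z}_q$ and merely permutes block indices.
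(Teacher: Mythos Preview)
Your proposal is correct and follows essentially the same approach as the paper: both compute the induced action on blocks via $B_j\mapsto B_{q^t(j+s)}$, solve the fixed-point congruence using Lemma~\ref{lem:eas} when $t\neq 0$, note that $t=0$ forces $s=0$ against the hypothesis, and deduce pointwise fixing from the fact that $\alpha$ and $\psi$ leave the first coordinate untouched. Your write-up is in fact slightly more explicit than the paper's (which omits the $s$-dependence in the solution and calls the last assertion ``straightforward''), but there is no methodological difference.
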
	
\begin{proof}	
	For $ j \in \{0,\dots,p-1\}$ we have $B_j^{\alpha^s \psi^t}=B_{q^t(j+s)}$.
	If $t=0$, then the element $\alpha^s\psi^t=\alpha^s$ fixes no block, since $s \neq 0$. Now assume $t \neq 0$. By Lemma \ref{lem:eas}, $q^t-1$ is invertible modulo $p$. Hence, $\alpha^s \psi^t$ fixes a block $B_j$ if and only if $q^t(j+s)=j$, or equivalently, $j\equiv(q^t-1)^{-1} \pmod{p}$. Therefore, we get \eqref{eq:rmk-fixed-blocks}. The last statement is straightforward.
\end{proof}

\subsubsection{Structure of $\Gamma_{M}$}
In this section, we determine the structure of the derangement graph of $M$ using the lexicographic product. Given two graphs $X = (V(X),E(X))$ and $Y = (V(Y),E(Y))$, the \emph{lexicographic product} $X[Y]$ is the graph whose vertex set is $V(X) \times V(Y)$ and  two vertices $(x,y)$ and $(u,v)$ are adjacent in $X[Y]$ if $x \sim_X u$ or $x = u$ and $y \sim_Y v$.

Recall that $\psi$ fixes the elements of the block $\mathbb{Z}_q \times \{0\}$ pointwise. Since $M = H\rtimes \left(\langle \alpha\rangle \rtimes \langle \psi\rangle \right)$, an element of $M$ is of the form $h\alpha^s\psi^t$, for some $h\in H,\ s\in \{0,1,\ldots,p-1\}$ and $t \in \{0,1,\ldots,k-1\}$.

Let us find the structure of the derangement graph of $M$. The point stabilizer $H_{(i,j)}$, for $(i,j)\in \Omega_{p,q}$, is crucial to the structure of this graph. We note that $H$ acts transitively on each element of $\mathcal{B}$. Therefore,  $[H:H_{(i,j)}]=q$.

Now, we decompose the derangement graph of $M$ into cosets of the subgroup $H.$ The cosets $M/H$ partition $M$. %.
Therefore, the derangement graph $\Gamma_{M}$ can also be partitioned into $[M:H]$ parts where each part is a coclique of size $|H|$ ($H$ is intersecting). We shall describe the edges of $\Gamma_{M}$ by describing the edges between $H$ and another coset of $H$ in $M$.

We define $H^{(t)}:= \left\{ H \alpha^s \psi^t  \mid  \ 0\leq s\leq p-1  \right\}$, for $0\leq t \leq k-1$. We note that any coset of $H$ in $M$ is in $\bigcup_{t=0}^{k-1}H^{(t)}$ and the latter is a pairwise disjoint union.	

Assume that $ Hy\in H^{(t)}$ and $Hz \in H^{(t^\prime)}$ for $t,t^\prime \in \{0,1,\ldots,k-1\}$. By definition,  for $h_1,h_2\in H$ we have
\begin{align*}
	h_1y \sim h_2z \Leftrightarrow  h_1 \sim h_2zy^{-1}.
\end{align*}
Consequently, we may only consider the adjacency between $H$ and $ Hzy^{-1}$. Assume that $Hzy^{-1} \in H^{(t)}$ for some $t\in \{0,1,\ldots, k-1\}$. That is, there exists $s \in \{0,1,\ldots,p-1\}$ such that $Hzy^{-1} = H \alpha^s\psi^t$. 
Let $\Gamma_{M}[\alpha^s\psi^t]$ be the subgraph of $\Gamma_M$ induced by $H \cup H\alpha^s\psi^t$. 
\begin{itemize}
	\item If $t = 0$, then $B^{\alpha^s\psi^t} \neq B$ for any $B\in \mathcal{B}$ (see \eqref{eq:rmk-fixed-blocks}), unless $s = 0$. If $s\neq 0$ and $t=0$, then for any $(i,j) \in \Omega_{p,q}$, we have
	\begin{align*}
		(i,j)^{n\alpha^s} \neq (i,j)^{n^\prime},
	\end{align*} 
	for any $n,n^\prime \in H$. Therefore, all the possible edges between $H$ and $H\alpha^s \mbox{ occur }$ and so
	\begin{align}
		\Gamma_{M}[\alpha^s] = K_{|H|,|H|} \mbox{ for any }s \in \{1,2,\ldots,p-1\}, \label{eq:multipartite-part}
	\end{align}
	where $K_{|H|,|H|}$ is the complete bipartite graph with parts of equal size $|H|$.
	
	\item By \eqref{eq:rmk-fixed-blocks}, if $t\neq 0$, then there exists a unique $B\in \mathcal{B}$ such that $B^{\alpha^s\psi^t} = B$, and moreover, $\alpha^s \psi^t$ fixes $B$ pointwise. Assume without loss of generality that this block $B$ is $B_\ell = \mathbb{Z}_q \times \{\ell\}$, for some $\ell \in \mathbb{Z}_p $. Let $\beta_{\mathbf{c}}\in H,$ where $\mathbf{c} = (c_0,c_1,\ldots,c_{p-1}) \in C$ and $c_\ell \neq 0$  (i.e., $\langle\beta_\mathbf{c}\rangle$ is transitive on $B$). The subgroup $\langle \beta_{{\mathbf{c}}}\rangle = \{ id,\beta_{{\mathbf{c}}},\beta_{\mathbf{c}}^2,\ldots,\beta_{\mathbf{c}}^{q-1} \}$ is a right transversal of $H_{(0,\ell)}$ in $H$, that is, $\left|\langle \beta_{\mathbf{c}}\rangle \cap H_{(0,\ell)} y\right| = 1$ for any $y\in H$.
	Now we will see that we can precisely determine the edges of $\Gamma_M[\alpha^s\psi^t]$.
	
	The subgroup $H$ is partitioned by the cosets $\left(H_{(0,\ell)} \beta_{\mathbf{c}}^l\right)_{l=0,1,\ldots,q-1}$. 
	Now, we determine the edges between $H_{(0,\ell)} \beta_{\mathbf{c}}^l$ and $H_{(0,\ell)} \beta_{\mathbf{c}}^{l^\prime} \alpha^s \psi^t$, for $l,l^\prime \in \{0,1,\ldots,q-1\}$. Assume that $(i^\prime,j^\prime)\in B^\prime = B_{\ell^\prime}$, where $\ell^\prime \neq \ell$. Since $\alpha^s\psi^t$ only fixes $B$ and $H_{(0,\ell)} \beta_{\mathbf{c}}^l \subset H$ which is the kernel of the action on $\mathcal{B}$, we know that  for any $l,l^\prime\in \{0,1,\ldots,q-1\}$
	\begin{align*}
		\left(i^\prime,j^\prime\right)^{H_{(0,\ell)} \beta_{\mathbf{c}}^{l^\prime} \alpha^s \psi^t} \cap B^\prime = \varnothing \mbox{ and } \left(i^\prime,j^\prime\right)^{H_{(0,\ell)}  \beta_{\mathbf{c}}^l} \subset B^\prime .
	\end{align*}
	In other words, no elements of $H_{(0,\ell)} \beta_{\mathbf{c}}^l$ can intersect an element of $H_{(0,\ell)} \beta_{\mathbf{c}}^{l^\prime} \alpha^s\psi^t$ outside of $B$. Let us examine the possible elements of $B$ on which two permutations from the two cosets can intersect.  Since $H_{(0,\ell)}$ fixes $B$ pointwise, if $l = l^\prime$ and $(i,\ell) = (0,\ell)^{\beta_{\mathbf{c}}^l}$, then for any $z,z^\prime\in H_{(0,\ell)} = H_{(i,\ell)} $ we have
	\begin{align*}
		(0,\ell)^{z^\prime \beta_{\mathbf{c}}^{l^\prime} \alpha^s \psi^t} &= \left((0,\ell)^{z^\prime \beta_{\mathbf{c}}^{l^\prime}}\right)^{\alpha^s\psi^t} = (0,\ell)^{z^\prime \beta_{\mathbf{c}}^{l^\prime}} = (0,\ell)^{\beta_{\mathbf{c}}^{l^\prime}} = (0,\ell)^{\beta_{\mathbf{c}}^l} = (i,\ell)\\
		(0,\ell)^{z\beta_{\mathbf{c}}^l} &= (0,\ell)^{\beta_{\mathbf{c}}^l} = (i,\ell).
	\end{align*}
	In other words, any permutation in $H_{(0,\ell)} \beta_{\mathbf{c}}^{l^\prime} \alpha^s \psi^t$ intersects any permutation in $H_{(0,\ell)} \beta_{\mathbf{c}}^l$. 
	
	If $l\neq l^\prime$, then $\beta_{\mathbf{c}}^l\beta_{\mathbf{c}}^{-l^\prime}$ does not have a fixed point in $B$ since $\langle \beta_{\mathbf{c}}\rangle$ is a right transversal of $H_{(0,\ell)}$ in $H$. For any $z,z^\prime \in H_{(0,\ell)}$, we have
	\begin{align*}
		(i,\ell)^{z\beta_{\mathbf{c}}^l} &= {(i,\ell)}^{\beta_{\mathbf{c}}^l}\\
		(i,\ell)^{z^\prime \beta_{\mathbf{c}}^{l^\prime} \alpha^s\psi^t} &= (i,\ell)^{z^\prime \beta_{\mathbf{c}}^{l^\prime}} = (i,\ell)^{\beta_{\mathbf{c}}^{l^\prime}} \neq (i,\ell)^{\beta_{\mathbf{c}}^l}.
	\end{align*}
	Consequently, no permutation in $H_{(i,j)} \beta_{\mathbf{c}}^l$ can intersect any permutation of $H_{(i,j)} \beta_{\mathbf{c}}^{l^\prime} \alpha^s \psi^t$, for $l\neq l^\prime$.
	In summary, for any $t\neq 0$ the graph induced by 
	\begin{align}
		\begin{cases}
			H_{(0,\ell)} \beta_{\mathbf{c}}^l \cup H_{(0,\ell)} \beta_{\mathbf{c}}^{l^\prime} \alpha^s \psi^t \mbox{ is a coclique} & \mbox{ if } l = l^\prime\\
			H_{(0,\ell)} \beta_{\mathbf{c}}^l \cup H_{(0,\ell)} \beta_{\mathbf{c}}^{l^\prime} \alpha^s \psi^t \mbox{ is complete bipartite} & \mbox{ if } l \neq l^\prime . \label{eq:non-multipartite-part}
		\end{cases}
	\end{align}
\end{itemize}

Let $K_q^2$ be the complete bipartite graph $K_{q,q}$. In addition, let $\widetilde{K}_{q}^2$ be the graph obtained from $K_q^2$ by removing $q$ vertex-disjoint edges, i.e., a perfect matching. Combining \eqref{eq:multipartite-part} and \eqref{eq:non-multipartite-part}, we conclude that 
\begin{align*}
	\Gamma_{M}[\alpha^s\psi^t]
	=
	\begin{cases}
		K_{q}^2[\Gamma_{H_{(0,\ell)}}] &\mbox{ if } t=0\\
		\widetilde{K}_{q}^2[\Gamma_{H_{(0,\ell)}}] & \mbox{ otherwise}.
	\end{cases}
\end{align*}

From this, the adjacency in $\Gamma_M$ between $H \alpha^s \psi^t$ and $H\alpha^{s^\prime} \psi^{t^\prime}$ is completely determined by the values of $t-t^\prime$. If $t = t^\prime$, then the subgraph induced by $H \alpha^s \psi^t \cup H \alpha^{s^\prime} \psi^{t^\prime}$, where $s\neq s^\prime$, in $\Gamma_M$ is isomorphic to $\Gamma_M[\alpha^{s-s'}]$ and therefore it is  isomorphic to $K_{q}^2[\Gamma_{H_{(0,\ell)}}]$. If $t\neq t^\prime$, then the subgraph induced by $H \alpha^s \psi^t \cup H \alpha^{s^\prime} \psi^{t^\prime}$ in $\Gamma_M$ is isomorphic to  $\Gamma_M[\alpha^{s-q^{t'-t}s'}\psi^{t'-t}]$, and so it is isomorphic to $\widetilde{K}_{q}^2[\Gamma_{H_{(0,\ell)}}]$.

\subsubsection{Proof of Theorem \ref{thm:density-MC}}
Let $\mathcal{F}$ be a coclique of $\Gamma_M.$ Without loss of generality, we may assume that $id\in \mathcal{F}$ since we can always multiply $\mathcal{F}$ by the inverse of one of its elements and obtain another intersecting set. 

First assume that $\mathcal{F}$ is not contained in $H$. For any $t\in \{0,1,\ldots,k-1\}$, we define $\mathcal{F}_t = \mathcal{F} \cap \left( \bigcup_{s=0}^{p-1}H \alpha^s \psi^t \right)$. 
\vskip 1mm
\noindent
{\bf Claim 1:} {\it For any $t\in \{1,\ldots,k-1\}$, we have $|\mathcal{F}_t|\leq \frac{|H|}{q}$.} 

Indeed, from the structure of $\Gamma_M$, $\mathcal{F}_t$ is contained in $H \alpha^s\psi^t$ for some $s \in \{0,\dots,p-1\}$. Now let $g=\beta_{\mathbf{c}}\alpha^s\psi^t \in \mathcal{F}_t$. Since $g \not\sim_{\Gamma_{M}} id$, there exists $(i,j) \in \Omega_{p,q}$ such that 
$(i,j)=(i,j)^g$, that is, $(i,j)=(i+c_j,q^t(j+s))$. From Lemma \ref{lem:eas}, $j=-(q^t-1)^{-1}s$ and $c_j=0$. Therefore, there are at most $\frac{|H|}{q}$ possible choices for $\mathbf{c}$, and so $|\mathcal{F}_t| \leq  \frac{|H|}{q}$.
\vskip 1mm
\noindent
{\bf Claim 2:} {\it We have $|\mathcal{F}_0|\leq \frac{|H|}{q}$.} 

Indeed, $\mathcal{F}_0$ is contained in $H\alpha^s$ for some  $s \in \{0,\dots,p-1\}$. The identity element $id$ is non-adjacent to every element of $\mathcal{F}_t$ and no element in $H\alpha^s$ has a fixed point for $s \in \{1,\dots,p-1\}$, hence $s=0$ and $\mathcal{F}_0 \subset H$. Consider $ u \in \mathcal{F} \setminus H \neq \varnothing$ and write $u^{-1}=\beta_{\mathbf{c'}}\alpha^{s'}\psi^{t'}$. Now let $h=\beta_{\mathbf{c}} \in \mathcal{F}_0$. Since $h \not\sim u$, there exists $(i,j) \in \Omega_{p,q}$ such that $(i,j)=(i,j)^{hu^{-1}}=(i,j)^{\beta_{\mathbf{c}+\mathbf{c'}}\alpha^{s'}\psi^{t'}}$. Note that $t' \neq 0$ since $\beta_{\mathbf{c}+\mathbf{c'}}\alpha^{s'}$ fixes no point when $s' \neq 0$. The same argument as in Claim 1 proves that there are at most $\frac{|H|}{q}$ choices for $\mathbf{c}+\mathbf{c'}$ and thus for $\mathbf{c}$. Therefore, $|\mathcal{F}_0| \leq \frac{|H|}{q}$.
\vskip 1mm
\noindent
We conclude that
\begin{align*}
	|\mathcal{F}| = \sum_{t=0}^{k-1}|\mathcal{F}_t|\leq \frac{k|H|}{q}.
\end{align*}
Moreover, this bound is sharp since it is attained by the coclique $\bigcup_{t=0}^{k-1} H_{(1,0)} \psi^t$. Hence, we have
\begin{align*}
	\rho(\mathcal{F}) \leq \frac{\frac{k |H|}{q}}{\frac{|M|}{pq}} = 1 .
\end{align*}

Finally, assume that $\mathcal{F}$ is contained in $H$. Then, it is clear that $|\mathcal{F}| \leq |H|$ and so
\begin{align*}
	\rho(\mathcal{F}) \leq \frac{|H|}{\frac{|M|}{pq}} = \frac{|H|pq}{|H|pk} = \frac{q}{k}.
\end{align*}
The latter is clearly attained by $H$. Consequently, we have $\rho(M) = \max(1,\frac{q}{k})$. Moreover, a maximum coclique of $\Gamma_M$ is either a coset of $H$ or is of the form
\begin{align*}
	\bigcup_{t = 0}^{k-1} H_{(0,0)}\beta_{\mathbf{c}}^{l_t}\alpha^{s_t} \psi^t,
\end{align*}
for the sequences $(l_t)_{t = 0,1,\ldots,k-1} \subset \{ 0,1,\ldots,q-1 \}$ and $(s_t)_{t=0,1,\ldots,k-1}\subset \{0,1,\ldots,p-1\}$.

This completes the proof of Theorem~\ref{thm:density-MC}.

\subsection{Proof of Theorem~\ref{thm:main}}
	The group $M(C)$  acts imprimitively on $\Omega_{p,q}$ with block system $\mathcal{B}=\{\mathbb{Z}_q \times \{j\} \mid j \in \{0,1,\dots,p-1\}\}$. By Theorem~\ref{thm:density-MC}, $\rho(M(C)) = \max( 1,\frac{q}{k} )$. In particular, if $p = \frac{q^k-1}{q-1}$ such that $k<q$, then $\rho(M(C)) = \frac{q}{k}$ is a rational number.

\section{Future work}\label{sect:future}
In this paper, we studied the intersection density of imprimitive groups of degree a product of two odd primes $p>q$. For transitive groups of degree $pq$ admitting a unique block system with blocks of size $q$, as well as non-trivial normal subgroups, we proved Theorem~\ref{thm:codes} and Theorem~\ref{thm:main}. In addition, we proved that if  $G\leq \sym(\Omega)$ is imprimitive and quasiprimitive of degree $pq$, then $G$ is an almost simple group containing $\alt(5)$ or a projective special linear group. The former has intersection density $1$ and we conjectured in Conjecture~\ref{conj2} that the latter also has intersection density $1$. If Conjecture~\ref{conj2} is true, then we will obtain that $\mathcal{I}_{pq} = \{1\}$ if and only if there exists no $[p,k]_q$ cyclic code $C$ such that $Z(\mathbf{c})>0$, for any $\mathbf{c} \in C$. It is likely that Conjecture~\ref{conj2} is too hard for the proof techniques that are available in the literature.

Combining  \cite[Conjecture~9.1]{razafimahatratra2021intersection}, Conjecture~\ref{conj1}, and Conjecture~\ref{conj2}, we make the following conjecture.
\begin{conj}
	For any two odd primes $p>q$, we have
	\begin{enumerate}[(i)]
		\item $\mathcal{I}_{pq} = \{1\}$ if and only if there exists no $[p,k]_q$ cyclic codes $C$ such that $Z(\mathbf{c})>0$, for any $\mathbf{c} \in C$.
		\item If $C$ is a $[p,k]_q$ cyclic code such that $Z(\mathbf{c})>0$, for $\mathbf{c} \in C$, and $\sigma \in \PAut(C)$ is the cyclic shift automorphism, then 
		\begin{align*}
			\mathcal{I}_{pq} = \left\{\tfrac{q}{k} \mid \operatorname{N}_{\PAut(C)}(\langle \sigma \rangle) \mbox{ admits an element of order $k<q$} \right\} \cup \{1\}.
		\end{align*}
	\end{enumerate}\label{conj:main}
\end{conj}

%%%%%%%%%%%%%%%%%%%%%%%%%%%%%%%%%%%%%%%%%%%%%%%%%%%%
%%%%%%%%%%%%%%%%%%%%%%%%%%%%%%%%%%%%%%%%%%%%%%%%%%%%
\vskip 5mm
\noindent{\bf Acknowledgement.} \ 
We would like to thank the anonymous referees for their comments and suggestions regarding the content and the presentation of the paper.

The first author is grateful for the support of the Israel Science Foundation (grant no. 353/21).
This work was completed when the second and third authors were Ph.D students at the Department of Mathematics and Statistics, University of Regina, Canada.

 \end{document}